\newcommand{\sysn}{\left\{\begin{array}{rcl}}
\newcommand{\sysk}{\end{array}\right.}
\newcommand{\ingrw}[2]{\includegraphics[width=#1mm]{#2}}
\newtheorem{theorem}{Theorem}[section]
\theoremstyle{definition}
\newtheorem{definition}[theorem]{Definition}
\theoremstyle{remark}
\newtheorem{remark}[theorem]{Remark}
\numberwithin{equation}{section}
\newtheorem{corollary}[theorem]{Corollary}
\begin{document}

\vspace{0.5in}

\title[Some properties of minimal $S(\alpha)$ and $S(\alpha)FC$ spaces]%
{Some properties of minimal $S(\alpha)$ and $S(\alpha)FC$ spaces}

%    Information for first author:
\author{Alexander V. Osipov}
\address{Institute of Mathematics and Mechanics, Ural Branch of the Russian Academy of Sciences, Ural Federal University, Ekaterinburg, Russia}
%    Current address (if needed):
%\curraddr{}
\email{OAB@list.ru}
%\thanks{The first author was supported in part by NSF Grant \#000000.}

%    Information for second author (if needed):
%\author{Author Two}
%\address{}
%\email{}
%\thanks{Support information for the second author.}

%    General info

%%%%%%%%%%%%%%%%%%%%%%%%%%%%%%%%%%%%%%%%%%%%%%%%%%%
\subjclass[2010]{Primary 54D25, 54B10, 54D45; Secondary 54A10, 54D30}                                    %
%                                                                                                                           %
%         Please use the current 2010 Mathematics Subject Classification:             %
%         http://www.ams.org/mathscinet/msc/                                                        %
%         http://www.zentralblatt-math.org/msc/en/                                                 %
%%%%%%%%%%%%%%%%%%%%%%%%%%%%%%%%%%%%%%%%%%%%%%%%%%%

\keywords{$\theta^n$-closure, $S(n)$-closed space,
$S(n)$-$\theta$-closed space, minimal $S(n)$ space}

\begin{abstract}
A $S(n)$-space is $S(n)$-functionally compact ($S(n)FC$) if every
continuous function onto a $S(n)$-space is closed. $S(n)$-closed,
$S(n)$-$\theta$-closed, minimal $S(n)$ and $S(n)FC$ spaces are
characterized in terms of $\theta(n)$-complete accumulation
points. In paper we also give new characteristics of $R$-closed
and regular functionally compact spaces.

Results obtained to answer some questions raised by D.Dikranjan,
E.Giuli, L.Friedler, M.Girou, D.Pettey and J.Porter.
\end{abstract}

\maketitle

\section{Introduction}

  Dikranjan and Giuli~{\cite{dg}} introduced a notion of the $\theta^n$-closure operator
   and developed a theory of  $S(n)$-closed and $S(n)$-$\theta$-closed spaces.
   Jiang, Reilly, and Wang ~{\cite{jrw}} used the $\theta^n$-closure
in studying properties of minimal $S(n)$-spaces.

In work ~{\cite{osip5}} continues the study of properties inherent
in $S(n)$-closed and $S(n)$-$\theta$-closed spaces, using the
$\theta^n$-closure operator; in addition, wider classes of spaces
(weakly $S(n)$-closed and weakly $S(n)$-$\theta$-closed spaces)
are introduced.

 In this paper we continue the investigation of $S(n)$-closed,
 $S(n)$-$\theta$-closed, minimal $S(n)$ spaces with the use of $\theta(n)$-complete accumulation points. As we introduce
 new classes of $S(n)$-spaces --- $S(n)$-functionally compact spaces and to answer some questions raised
 in ~{\cite{dg,jrw}}.

 Section $2$ acquaints the reader with main
definitions and known properties in the theory of $S(n)$-spaces.
Section $3$ is completely devoted to the study of weakly
$S(n)$-closed and weakly $S(n)$-$\theta$-closed spaces. It is
proved that any $S(n)$-closed ($S(n)$-$\theta$-closed) space is
weakly $S(n)$-closed (weakly $S(n)$-$\theta$-closed). In the
remaining sections, we characterize $S(n)$-closed,
$S(n)$-$\theta$-closed, minimal $S(n)$ spaces, $S(n)$-functionally
compact, $R$-closed, minimal regular and regular functionally
compact spaces with the use of $\theta(n)$-complete accumulation
and $\theta(\omega)$-complete accumulation points.

\section{Main definitions and notation}

Let $X$ be a topological space, $M\subseteq X$, and $x\in X$. For
any $n\in \mathbb N$, we consider the $\theta^{n}$-closure
operator: $x\notin cl_{\theta^n} M$ if there exists a set of open
neighborhoods $U_{1}$, $U_{2}$, ..., $U_{n}$ of the point $x$ such
that $clU_{i}\subseteq U_{i+1}$ for $i=1, 2, ..., n-1$ and
$clU_{n}\bigcap M=\emptyset$  if $n>1$; $cl_{\theta^0}M=clM$ if
$n=0$; and, for $n=1$, we get the $\theta$-closure operator, i.e.,
$cl_{\theta^1}M=cl_{\theta}M$. A set $M$ is $\theta^n$-closed if
$M=cl_{\theta^n}M$. Denote by $Int_{\theta^n}M=X\setminus
cl_{\theta^n}(X\setminus M)$ the $\theta^n$-interior of the set
$M$. Evidently, $cl_{\theta^n}(cl_{\theta^s}M)=cl_{\theta^{n+s}}M$
for $M\subseteq X$ and $n,s\in \mathbb N$. For $n\in \mathbb N$
and a filter $\mathcal F$ on $X$, denote by
$ad_{\theta^{n}}\mathcal F$ the set of $\theta^n$-adherent points,
i.e., $ad_{\theta^n}\mathcal F=\bigcap \{ cl_{\theta^n}\mathcal
F_{\alpha} : F_{\alpha}\in \mathcal F\}$. In particular,
$ad_{\theta^0}\mathcal F=ad\mathcal F$ is the set of adherent
points of the filter $\mathcal F$. For any $n\in \mathbb N$, a
point $x\in X$ is $S(n)$-separated from a subset $M$ if $x\notin
cl_{\theta^n}M$. For example, $x$ is $S(0)$-separated from $M$ if
$x\notin clM$. For $n>0$, the relation of $S(n)$-separability of
points is symmetric. On the other hand, $S(0)$-separability may be
not symmetric in some not $T_1$-spaces. Therefore, we say that
points $x$ and $y$ are $S(0)$-separated if $x\notin
\{\overline{y}\}$ and $y\notin \{\overline{x}\}$.

Let $n\in \mathbb N$ and $X$ be a topological space.

1. $X$ is called an $S(n)$-space if any two distinct points of $X$
are $S(n)$-separated.

2. A filter $\mathcal F$ on $X$ is called an $S(n)$-filter if
every point, not being an adherent point of the filter $\mathcal
F$, is $S(n)$-separated from some element of the filter $\mathcal
F$.

3. An open cover $\{U_{\alpha}\}$ of the space $X$ is called an
$S(n)$-cover if every point of $X$ lies in the $\theta^n$-interior
of some $U_{\alpha}$.

It is obvious that $S(0)$-spaces are $T_0$-spaces, $S(1)$-spaces
are Hausdorff spaces, and $S(2)$-spaces are Urysohn spaces. It is
clear that every filter is an $S(0)$-filter, every open cover is
an $S(0)$-cover, and every open filter is an $S(1)$-filter. Open
$S(2)$-filters are called Urysohn filters. For $n > 1$, open
$S(n)$-filters were defined in ~{\cite{pv}}. $S(1)$-covers are
called Urysohn covers. In a regular space, every filter (every
cover) is an $S(n)$-filter ($S(n)$-cover) for any $n\in \mathbb
N$.

$S(n)$-closed and $S(n)$-$\theta$-closed spaces are $S(n)$-spaces,
closed and, respectively, $\theta$-closed in any $S(n)$-space
containing them.

Porter and Votaw ~{\cite{pv}} characterized $S(n)$-closed spaces
by means of open $S(n)$-filters and $S(n-1)$-covers.

Let $n\in \mathbb N^{+}$ and $X$ be an $S(n)$-space. Then the
following conditions are equivalent:

(1) $ad_{\theta^n}\mathcal F\neq \emptyset$ for any open filter
$\mathcal F$ on $X$;

(2) $ad\mathcal F\neq \emptyset$ for any open $S(n)$-filter
$\mathcal F$ on $X$;

(3) for any $S(n-1)$-cover $\{U_{\alpha}\}$ of the space $X$ there
exist $\alpha_1, \alpha_2,...,\alpha_k$ such that
$X=\bigcup_{i=1}^{k} \overline{U_{\alpha_i}}$;

(4) $X$ is an $S(n)$-closed space.

Dikranjan and Giuli  ~{\cite{dg}} characterized
$S(n)$-$\theta$-closed spaces in terms of $S(n-1)$-filters and
$S(n-1)$-covers.

Let $n\in N^+$ and $X$ be an $S(n)$-space. Then the following
conditions are equivalent:

(1) $ad\mathcal F\neq \emptyset$ for any closed $S(n-1)$-filter
$\mathcal F$ on $X$;

(2) any $S(n-1)$-cover of $X$ has a finite subcover;

(3) $ad_{\theta^{(n-1)}}\mathcal F\neq \emptyset$ for any closed
filter $\mathcal F$ on $X$;

(4) $X$ is an $S(n)$-$\theta$-closed space.

 Note that, for $n=1$, $S(1)$-closedness and $S(1)$-$\theta$-closedness are $H$-closedness and
compactness, respectively. For $n=2$, $S(2)$-closedness and
$S(2)$-$\theta$-closedness are $U$-closedness and
$U$-$\theta$-closedness, respectively. From characteristics
themselves, it follows that any $S(n)$-$\theta$-closed subspace of
an $S(n)$-space is an $S(n)$-closed space.

Recall that a open cover $\mathcal V$ is a shrinkable refinement
of open cover $\mathcal U$ if and only if for each $V\in \mathcal
V$, there is a $U\in \mathcal U$ such that $\overline{V}\subseteq
U$. A open cover $\mathcal V$ is a regular refinement of $\mathcal
U$ if and only if $\mathcal V$ refines $\mathcal U$ is a
shrinkable refinement of itself. An open cover is regular if and
only if it has an open refinement.

 An open filter base $\mathcal F$ in $X$ is a regular filter base
 if and only if for each $U\in \mathcal F$, there exists $V\in
 \mathcal F$ such that $\overline{V}\subseteq U$.

A $R$-closed space is a regular space closed in any regular space
containing them.

Berri, Sorgenfrey ~{\cite{bs}} characterized $R$-closed spaces by
means of regular filters and regular covers.

Let $X$ be a regular space. The following are equivalent:

(1) $X$ is $R$-closed.

(2) Every regular filter base in $X$ is fixed.

(3) Every regular cover has a finite subcover.

For undefined notions and related theorems, we refer readers to
~{\cite{dg}}.

\section{Weakly $S(n)$-closed and weakly $S(n)$-$\theta$-closed spaces}

In the Aleksandrov and Urysohn memoir on compact spaces
~{\cite{alur}}, the notion of a $\theta$-complete accumulation
point was introduced. A point $x$ is called a $\theta$-complete
accumulation point of a set $F$ if $|F\bigcap \overline{U}|=|F|$
for any neighborhood $U$ of the point $x$. It was noted that any
$H$-closed space has the following property:

(*) any infinite set of regular power has a $\theta$-complete
accumulation point. However, the converse is not true. The first
example of a space possessing property (*) and not being
$H$-closed was constructed by Kirtadze ~{\cite{kir}}. Simple
examples in~{\cite{osip5,por}} also shows the converse is not
true.

{\bf Example 1.} (Example 1 in \cite{osip5}).\ \ Let $T_{1}$ and
$T_{2}$ be two copies of the Tychonoff plane $T= ((\omega_{1} +
1)\times(\omega_{0} + 1)\setminus\{\omega_1,\omega_0\}$, whose
elements will be denoted by $(\alpha,n,1)$ and $(\alpha, n, 2)$,
respectively. On the topological sum $T_{1}\oplus T_{2}$, we
consider the identifications $(\omega_1,k, 1)\sim (\omega_{1}, 2k,
2)$ for every $k\in \mathbb N$; and we identify all points
$(\omega_1, 2k-1,2)$ for any $k\in \mathbb N$ with the same point
$b$. Adding, to the obtained space, a point $a$ with the base of
neighborhoods $U_{\beta,k}(a)=\{a\}\bigcup\{(\alpha,n,1): \beta <
\alpha < \omega_1, k < n\leq \omega_0 \}$ for arbitrary $\beta <
\omega_1$ and $k<\omega_0$, we get a Urysohn space $X$.

Note that space $X$ is an example of a non-$H$-closed, Urysohn
space with the property that for every chain of non-empty sets,
the intersection of the $\theta$-closures of the sets is nonempty,
every infinite set has a $\theta$-complete accumulation point.
J.Porter investigated the space with the same properties in
\cite{por}.

\bigskip

\begin{center}
\ingrw{110}{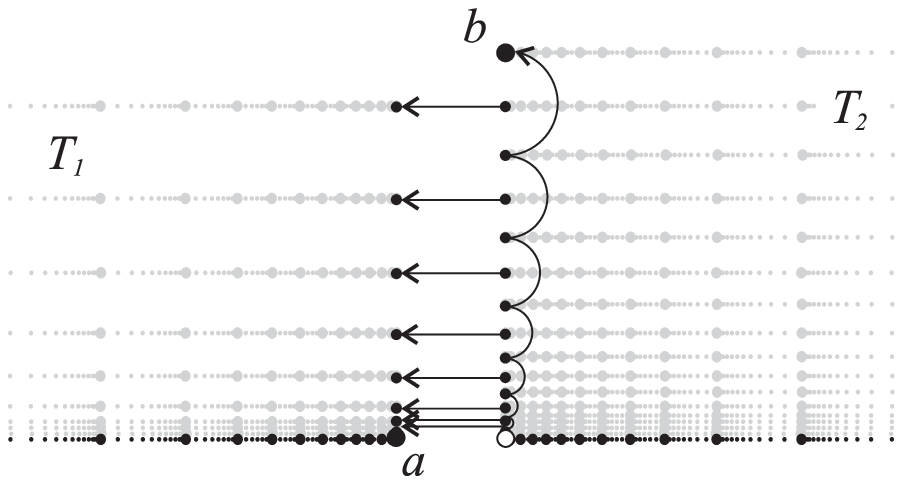}

\end{center}

\bigskip

\begin{definition} A neighborhood $U$ of a set $A$ is called an $n$-hull of
the set $A$ if there exists a set of neighborhoods $U_1$, $U_2$,
..., $U_n=U$ of the set $A$ such that $clU_{i}\subseteq U_{i+1}$
for $i=1,...,n-1$.
\end{definition}

\begin{definition} A point $x$ from $X$ is called a
$\theta^{0}(n)$-complete accumulation ($\theta(n)$-complete
accumulation) point of an infinite set $F$ if $|F\bigcap U|=|F|$
($|F\bigcap \overline{U}|=|F|$) for any $U$, where $U$ is an
$n$-hull of the point $x$.
\end{definition}

 Note that, for $n=1$, a $\theta^{0}(1)$-complete accumulation point is a point of complete accumulation,
and a $\theta(1)$-complete accumulation point is a
$\theta$-complete accumulation point.

\begin{definition} A topological space $X$ is called weakly
$S(n)$-$\theta$-closed (weakly $S(n)$-closed) if any infinite set
of regular power of the space $X$ has a $\theta^0(n)$-complete
accumulation ($\theta(n)$-complete accumulation) point.
\end{definition}

 Note that any $\theta^0(n)$-complete accumulation point is a
$\theta(n)$-complete accumulation point; hence, any weakly
$S(n)$-$\theta$-closed space is weakly $S(n)$-closed. Moreover,
since a $\theta(n)$-complete accumulation point is a
$\theta^{0}(n+1)$-complete accumulation point, it follows that a
weakly $S(n)$-closed space will be weakly
$S(n+1)$-$\theta$-closed. For $n=1$, weakly $S(1)$-$\theta$-closed
and weakly $S(1)$-closed spaces are compact Hausdorff spaces and
spaces with property (*), respectively.

\begin{theorem}
\label{tm1}

Let $X$ be an $S(n)$-closed $S(n)$-space. Then $X$ is weakly
$S(n)$-closed.
\end{theorem}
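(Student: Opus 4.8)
The plan is to argue by contradiction: suppose $F$ is an infinite subset of $X$ of regular power $\kappa = |F|$ having no $\theta(n)$-complete accumulation point. The goal is to build from this an open filter on $X$ whose $\theta^n$-adherence is empty, contradicting the characterization (Porter--Votaw, condition (1) $\Leftrightarrow$ (4) in the excerpt) of $S(n)$-closedness. So the first step is, for each point $x\in X$, to use the failure of $x$ being a $\theta(n)$-complete accumulation point: there is an $n$-hull $U_x$ of $x$ (coming from a chain of neighborhoods $U_1^x,\dots,U_n^x=U_x$ with $\overline{U_i^x}\subseteq U_{i+1}^x$) such that $|F\cap\overline{U_x}| < \kappa$.

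Next I would assemble these into a cover and pass to a filter. The family $\{U_1^x : x\in X\}$ is an open cover of $X$, and in fact an $S(n-1)$-cover — or more directly, one can observe that each $x$ lies in $\mathrm{Int}_{\theta^{n-1}}U_n^x$ because of the chain $\overline{U_i^x}\subseteq U_{i+1}^x$. Hmm — rather than route through the cover characterization, the cleaner approach is to work with the complements directly. For a subset $A\subseteq X$ that meets $F$ in fewer than $\kappa$ points, and for the purposes of getting a filter, I would look at sets of the form $F\setminus(\overline{U_{x_1}}\cup\dots\cup\overline{U_{x_m}})$; since $\kappa$ is regular and each piece removed has size $<\kappa$, every such set still has size $\kappa$, hence is nonempty, so these generate a filter $\mathcal{G}$ on $X$. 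Now let $\mathcal{F}$ be the open filter generated by taking, for each $G\in\mathcal{G}$, an appropriate open set — concretely, the filter whose base is $\{X\setminus(\overline{U_{x_1}}\cup\dots\cup\overline{U_{x_m}}) : x_i\in X\}$, provided these are nonempty; if a finite union of the $\overline{U_{x_i}}$ covers $X$ we must instead work inside $F$, so care is needed. The robust way: define $\mathcal{F}$ to be generated by $\{X\setminus \overline{U_{x_1}}, \dots\}$ closed under finite intersection, and observe this is a genuine filter precisely when no finitely many $\overline{U_{x_i}}$ cover $X$ — and if finitely many do cover $X$, then $F = \bigcup_{i=1}^m (F\cap\overline{U_{x_i}})$ would express $F$ as a union of $m < \omega$ sets each of size $<\kappa$, impossible since $\kappa$ is an infinite regular cardinal. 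So $\mathcal{F}$ is a legitimate open filter.

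Finally I would show $ad_{\theta^n}\mathcal{F} = \emptyset$. Take any $y\in X$; I claim $y\notin cl_{\theta^n}(X\setminus\overline{U_y})$. Indeed, by Definition of the $\theta^n$-closure I need neighborhoods $V_1,\dots,V_n$ of $y$ with $\overline{V_i}\subseteq V_{i+1}$ and $\overline{V_n}\cap (X\setminus\overline{U_y})=\emptyset$, i.e. $\overline{V_n}\subseteq\overline{U_y}$. The chain $U_1^y,\dots,U_n^y = U_y$ witnessing that $U_y$ is an $n$-hull of $y$ does exactly this: set $V_i = U_i^y$; then $\overline{V_i}\subseteq U_{i+1}^y = V_{i+1}$ for $i<n$, and $\overline{V_n} = \overline{U_n^y} = \overline{U_y}$, which is disjoint from $X\setminus\overline{U_y}$. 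Wait — I need $\overline{V_n}\cap M = \emptyset$ with $M = X\setminus\overline{U_y}$, and $\overline{U_y}\cap(X\setminus\overline{U_y})=\emptyset$ indeed. This shows $y\notin cl_{\theta^n}(X\setminus\overline{U_y})$, but I need a filter \emph{element}; $X\setminus\overline{U_y}$ contains a filter element of $\mathcal{F}$ only if it is one, which it is by construction (taking the single index $x_1 = y$, assuming $\overline{U_y}\neq X$, which holds as above). Hence $y\notin ad_{\theta^n}\mathcal{F}$, and since $y$ was arbitrary, $ad_{\theta^n}\mathcal{F}=\emptyset$ — contradicting $S(n)$-closedness.

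The main obstacle I anticipate is the bookkeeping around whether the $X\setminus\overline{U_x}$ really form a filter base (the cardinality argument above handles it, but one must be careful that $\kappa$ regular is used exactly there) and making sure the $n$-hull chain from the \emph{definition of} $\theta(n)$-complete accumulation point lines up correctly with the chain required in the \emph{definition of} $cl_{\theta^n}$ — the indices and the direction of the inclusions $\overline{U_i}\subseteq U_{i+1}$ must be matched without an off-by-one error. A secondary subtlety is the case $n=0$ or $n=1$, where "$n$-hull" degenerates and one should check the argument still reads correctly (for $n\ge 1$ it is the genuine content).
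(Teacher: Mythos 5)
Your proof is correct, and it reaches the same contradiction as the paper's, but it routes through a different clause of the Porter--Votaw characterization of $S(n)$-closedness. The paper takes the chosen $n$-hulls $\{U_x\}$ (each with $|F\cap\overline{U_x}|<|F|$), observes they form an $S(n-1)$-cover of $X$, applies clause (3) to extract finitely many whose closures cover $X$, and only then invokes the regularity of $|F|$ to get the contradiction. You instead dualize: you use the regularity of $|F|$ up front to show that no finite union of the $\overline{U_x}$ can cover $X$, so the complements $X\setminus(\overline{U_{x_1}}\cup\dots\cup\overline{U_{x_m}})$ form a proper open filter base, and then you verify (correctly, with the $n$-hull chain matching the chain in the definition of $cl_{\theta^n}$) that this filter has empty $\theta^n$-adherence, contradicting clause (1). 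The combinatorial core --- an infinite set of regular power cannot be a finite union of strictly smaller pieces --- is identical in both arguments; the cover route is a line or two shorter because it avoids constructing the filter and checking adherence pointwise, while your filter route has the small advantage of making explicit exactly where the $n$-hull chain is consumed by the $\theta^n$-closure operator. Your side remarks (that $\{U_1^x\}$ is an $S(n-1)$-cover, and the care about $\overline{U_y}\neq X$) are accurate; the only cosmetic caveat is that, as in the paper, the statement really carries content for $n\geq 1$, where the notion of $n$-hull is nondegenerate.
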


\begin{proof}

 Suppose the contrary. Let $X$ be $S(n)$-closed
but not weakly $S(n)$-closed. Then in the spaces $X$ there exists
an infinite set $F$ of regular power that has no
$\theta(n)$-complete accumulation point. For any point $x\in X$,
there exists an $n$-hull $U$ of the point $x$ with the property
$|F\bigcap \overline{U}|<|F|$. If we take such $n$-hull for every
point $x\in X$, we derive an $S(n-1)$-cover of the space $X$. By
$S(n)$-closedness, there exists a finite family $\mathcal U$ of
$n$-hulls such that $|F\bigcap \overline{U}|<|F|$ $\forall U \in
\mathcal U$ and $\bigcup \overline{\mathcal U}=X$. This
contradicts the fact that $F$ is infinite set of regular power.

\end{proof}

\begin{theorem}
\label{tm2}

Let $X$ be an $S(n)$-$\theta$-closed $S(n)$-space. Then $X$ is
weakly $S(n)$-$\theta$-closed space.
\end{theorem}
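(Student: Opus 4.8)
The plan is to mimic the proof of Theorem~\ref{tm1}, but to replace the use of the characterization of $S(n)$-closedness by the characterization of $S(n)$-$\theta$-closedness recalled in Section~2 (specifically, the equivalence ``(2) any $S(n-1)$-cover of $X$ has a finite subcover $\Leftrightarrow$ (4) $X$ is $S(n)$-$\theta$-closed''). Suppose for contradiction that $X$ is $S(n)$-$\theta$-closed but not weakly $S(n)$-$\theta$-closed. Then there is an infinite set $F\subseteq X$ of regular power with no $\theta^0(n)$-complete accumulation point, i.e.\ for every $x\in X$ there is an $n$-hull $U$ of $x$ with $|F\cap U|<|F|$.

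Next I would observe that assigning to each point $x$ such an $n$-hull $U(x)$ yields a family $\{U(x)\}_{x\in X}$ which is an $S(n-1)$-cover of $X$: this is the same bookkeeping as in Theorem~\ref{tm1}, since by Definition~1 an $n$-hull of $x$ contains $x$ in its $\theta^{n-1}$-interior (the nested sequence $clU_i\subseteq U_{i+1}$, $i=1,\dots,n-1$, with $U_n=U$ is exactly the witness that $x\in Int_{\theta^{n-1}}U$). By the characterization of $S(n)$-$\theta$-closedness, this $S(n-1)$-cover has a finite subcover: there are $x_1,\dots,x_k$ with $X=\bigcup_{i=1}^k U(x_i)$ and $|F\cap U(x_i)|<|F|$ for each $i$. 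Then $F=\bigcup_{i=1}^k (F\cap U(x_i))$ exhibits $F$ as a finite union of sets each of cardinality strictly less than $|F|$, contradicting that $|F|$ is a regular (infinite) cardinal. This contradiction proves that $X$ is weakly $S(n)$-$\theta$-closed.

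The only point that needs a little care — and the main (very mild) obstacle — is the passage from ``finite union of sets of size $<|F|$'' to a contradiction: this is precisely where regularity of the cardinal $|F|$ is used, exactly as in Theorem~\ref{tm1}, so no new idea is required. I would also note explicitly that here we get a finite subcover rather than merely a finite family of $n$-hulls whose closures cover $X$ (as in the $S(n)$-closed case), so we work with $|F\cap U|<|F|$ instead of $|F\cap\overline{U}|<|F|$, which is exactly why the conclusion is the stronger ``weakly $S(n)$-$\theta$-closed'' (using $\theta^0(n)$-complete accumulation points, the condition $|F\cap U|=|F|$ on $n$-hulls) rather than ``weakly $S(n)$-closed''. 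Thus the proof is a routine adaptation; I do not anticipate any genuine difficulty.

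\begin{proof}
Suppose the contrary. Let $X$ be $S(n)$-$\theta$-closed but not weakly $S(n)$-$\theta$-closed. Then in the space $X$ there exists an infinite set $F$ of regular power that has no $\theta^0(n)$-complete accumulation point. For any point $x\in X$, there exists an $n$-hull $U$ of the point $x$ with the property $|F\bigcap U|<|F|$. Choosing such an $n$-hull for every point $x\in X$, we obtain an $S(n-1)$-cover of the space $X$. By $S(n)$-$\theta$-closedness, this $S(n-1)$-cover has a finite subcover; that is, there exist $n$-hulls $U_1,\dots,U_k$ with $|F\bigcap U_i|<|F|$ for $i=1,\dots,k$ and $X=\bigcup_{i=1}^{k} U_i$. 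Then $F=\bigcup_{i=1}^{k}(F\bigcap U_i)$ is a finite union of sets of cardinality less than $|F|$, which contradicts the fact that $F$ is an infinite set of regular power.
\end{proof}
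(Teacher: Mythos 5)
Your proof is correct and is exactly the ``analogous'' argument the paper intends when it says the proof of Theorem~\ref{tm2} mirrors that of Theorem~\ref{tm1}: you replace closures of $n$-hulls by the $n$-hulls themselves and invoke the characterization of $S(n)$-$\theta$-closedness via finite subcovers of $S(n-1)$-covers, then use regularity of $|F|$. No further comment is needed.
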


A proof of Theorem~\ref{tm2} is analogous to that of Theorem
~\ref{tm1}.

 It was proved in ~{\cite{dg}} that $S(n)$-closedness
implies $S(n+1)$-$\theta$-closedness. Thus, for $S(n)$-spaces,
classes of the considered spaces are presented in the following
diagram:

\bigskip

\begin{tabular}{ccc}
compact Hausdorff space &  $\Longleftrightarrow$ &     weakly
$S(1)$-$\theta$-closed\\
$\Downarrow$&&                                     $\Downarrow$\\
 $H$-closed &  $\Longrightarrow$ &     weakly $H$-closed\\
 $\Downarrow$ &&                   $\Downarrow$\\
$U$-$\theta$-closed &   $\Longrightarrow$   &     weakly $U$-$\theta$-closed \\
$\Downarrow$            &           & $\Downarrow$\\
$U$-closed &   $\Longrightarrow$  &     weakly $U$-closed\\
 $\Downarrow$ &&                        $\Downarrow$\\
 $\dots$ &$\dots$ &$\dots$\\
 $\Downarrow$ &&
$\Downarrow$\\
 $S(n-1)$-$\theta$-closed &   $\Longrightarrow$  &     weakly $S(n-1)$-$\theta$-closed\\
 $\Downarrow$       &&                 $\Downarrow$\\
$S(n-1)$-closed   & $\Longrightarrow$   &    weakly $S(n-1)$-closed\\
 $\Downarrow$   &&                        $\Downarrow$\\
$S(n)$-$\theta$-closed &   $\Longrightarrow$  &     weakly $S(n)$-$\theta$-closed\\
 $\Downarrow$   &&                        $\Downarrow$\\
$S(n)$-closed &   $\Longrightarrow$ &     weakly $S(n)$-closed\\
\end{tabular}

\bigskip

Note that all implications in the diagram are irreversible.
Examples of $S(n)$-closed but not $S(n)$-$\theta$-closed spaces
and $S(n)$-$\theta$-closed but not $S(n-1)$-closed spaces are
considered in  ~{\cite{dg}}. Examples are considered in
~{\cite{osip5}}, showing that the remaining implications are
irreversible.

\begin{theorem}

Let $X$ be a Lindel\"{e}of (finally compact) weakly $S(n)$-closed
$S(n)$-space. Then $X$ is $S(n)$-closed space.

\end{theorem}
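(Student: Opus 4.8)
The plan is to verify condition~(3) in the Porter--Votaw characterization of $S(n)$-closed spaces recalled in Section~2. Since $X$ is already assumed to be an $S(n)$-space, it suffices to show that every $S(n-1)$-cover $\{U_\alpha\}$ of $X$ admits finitely many members whose closures cover $X$.

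The first and decisive step is to reduce to a countable subcover. I would first record that for any $M\subseteq X$ and $m\in\mathbb N$ the set $cl_{\theta^m}M$ is closed: if $x\notin cl_{\theta^m}M$ is witnessed by open neighbourhoods $U_1,\dots,U_m$ of $x$ with $clU_i\subseteq U_{i+1}$ and $clU_m\cap M=\emptyset$, then for every $y\in U_1$ the very same chain shows $y\notin cl_{\theta^m}M$, so $U_1\subseteq X\setminus cl_{\theta^m}M$. Consequently every $Int_{\theta^{n-1}}U_\alpha$ is open, and by the definition of an $S(n-1)$-cover the family $\{Int_{\theta^{n-1}}U_\alpha\}$ is an open cover of $X$. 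Applying the Lindel\"of property, I extract a countable subfamily, which I reindex as $\{U_k\}_{k\in\mathbb N}$, with $\bigcup_k Int_{\theta^{n-1}}U_k=X$; in particular $\{U_k\}$ is again an $S(n-1)$-cover of $X$. I would also use the easy fact that if $p\in Int_{\theta^{n-1}}U$, then $U$ is an $n$-hull of $p$ (a witnessing chain for $p\notin cl_{\theta^{n-1}}(X\setminus U)$, extended by $U$ as its last term, is such a hull).

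Now suppose, toward a contradiction, that no finite subfamily of $\{U_k\}$ has its closures covering $X$, and set $V_k=X\setminus(\overline{U_1}\cup\cdots\cup\overline{U_k})$. These form a decreasing sequence of nonempty sets with $\bigcap_k V_k=X\setminus\bigcup_k\overline{U_k}=\emptyset$, the last equality because $\bigcup_k U_k=X$; hence $(V_k)$ is not eventually constant, so there are indices $k_1<k_2<\cdots$ with $V_{k_j}\setminus V_{k_j+1}\neq\emptyset$. Choosing $x_j\in V_{k_j}\setminus V_{k_j+1}$ yields a countably infinite set $F=\{x_j:j\in\mathbb N\}$ of pairwise distinct points (for $l>j$ one has $x_l\in V_{k_l}\subseteq V_{k_j+1}$ while $x_j\notin V_{k_j+1}$), and by construction $x_j\notin\overline{U_i}$ whenever $i\le k_j$. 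Since $|F|=\aleph_0$ is a regular cardinal, weak $S(n)$-closedness furnishes a $\theta(n)$-complete accumulation point $p$ of $F$. Picking $m$ with $p\in Int_{\theta^{n-1}}U_m$, the set $U_m$ is an $n$-hull of $p$, so $|F\cap\overline{U_m}|=|F|=\aleph_0$; but $x_j\notin\overline{U_m}$ for every $j$ with $k_j\ge m$, so $F\cap\overline{U_m}$ is finite --- a contradiction. Therefore $X$ is $S(n)$-closed.

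The step I expect to be the main obstacle is the reduction to a countable subcover: one has to notice that the $\theta^{n-1}$-interiors are open, so that Lindel\"ofness can be invoked, and that the countable family extracted in this way is still an $S(n-1)$-cover. After that, everything is a routine extraction of a countably infinite ``escaping'' set $F$, and the $\theta(n)$-complete accumulation point forced by weak $S(n)$-closedness collides with the fact that $F$ eventually avoids the closure of each $U_k$.
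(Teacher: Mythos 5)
Your proof is correct, and its skeleton is the same as the paper's: argue by contradiction, use Lindel\"ofness to pass to a countable family, extract a countably infinite ``escaping'' set, and contradict weak $S(n)$-closedness via a $\theta(n)$-complete accumulation point. The only genuine difference is which Porter--Votaw characterization you negate. You work with condition (3) (covers): you must first check that $cl_{\theta^{n-1}}$ of a set is closed, hence that the $\theta^{n-1}$-interiors form an honest open cover to which Lindel\"ofness applies, and that a countable subfamily of an $S(n-1)$-cover chosen this way is still an $S(n-1)$-cover --- these auxiliary observations, together with the remark that $p\in Int_{\theta^{n-1}}U$ makes $U$ an $n$-hull of $p$, are all verified correctly and are the price of the cover route. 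The paper instead negates condition (1) (open filters with empty $\theta^n$-adherence), applies Lindel\"ofness to the cover by the sets $X\setminus\overline{U_x}$, which are open with no further argument, and picks $y_j\in\bigcap_{i=1}^{j}F_{x_i}$ as the escaping set. The filter route is marginally shorter because the openness issue disappears; your cover route is slightly more self-contained in that it never mentions filters and makes explicit exactly where each hypothesis (openness of $\theta^{n-1}$-interiors, regularity of $\aleph_0$) is used. Both are valid.
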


\begin{proof}
Suppose the contrary. Let $X$ be a Lindel\"{e}of weakly
$S(n)$-closed but not $S(n)$-closed. Then in the spaces $X$ there
exists open filter $\mathcal F$ such that $ad_{\theta^n} \mathcal
F=\emptyset$. For each point $x\in X$ there are $F_{x}\in \mathcal
F$ and an $n$-hull $U_{x}$ of $F_{x}$ such that $x\notin
\overline{U_{x}}$. Note that $\bigcap_{x\in X}
\overline{U_{x}}=\emptyset$. Since $X$ is a Lindel\"{e}of, there
exists a countable family $\{U_{x_i}\}$ such that $\bigcap
\overline{U_{x_i}}=\emptyset$. Consider a sequence $\{y_{j}\}$
such that $y_{j}\in \bigcap_{i=1}^j F_{x_i}$. Clearly, the
infinite set $\{y_{j}\}$ has not a $\theta(n)$-complete
accumulation point. This contradicts the fact that $X$ is a weakly
$S(n)$-closed space.

\end{proof}

\begin{corollary}
Let $X$ be a countable weakly $S(n)$-$\theta$-closed $S(n)$-space.
Then $X$ is $S(n)$-closed.
\end{corollary}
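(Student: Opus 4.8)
The plan is to obtain the corollary as an immediate consequence of the previous theorem, according to which a Lindel\"of weakly $S(n)$-closed $S(n)$-space is $S(n)$-closed. Two trivial reductions do the job, and the real content has already been proved there.

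First I would note that a countable space is Lindel\"of: picking, for each point, one member of a given open cover that contains it produces a countable subcover, so $X$ is Lindel\"of. Second, I would recall the observation made right after the definition of weakly $S(n)$-closed spaces, namely that every $\theta^{0}(n)$-complete accumulation point of a set is in particular a $\theta(n)$-complete accumulation point; consequently every weakly $S(n)$-$\theta$-closed space is weakly $S(n)$-closed. Hence $X$ is a Lindel\"of weakly $S(n)$-closed $S(n)$-space, and the previous theorem yields that $X$ is $S(n)$-closed. On this route there is essentially no obstacle.

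If a self-contained argument is preferred, I would instead copy the proof of the previous theorem, replacing the use of the Lindel\"of property by countability: assuming $X$ is not $S(n)$-closed, choose an open filter $\mathcal F$ with $ad_{\theta^{n}}\mathcal F=\emptyset$; for each $x\in X$ select $F_{x}\in\mathcal F$ and an $n$-hull $U_{x}$ of $F_{x}$ with $x\notin\overline{U_{x}}$ (possible since $X$ is an $S(n)$-space); as $X$ is countable the family $\{\overline{U_{x}}:x\in X\}$ is countable with empty intersection, say $\bigcap_{i}\overline{U_{x_{i}}}=\emptyset$, and one forms a sequence $\{y_{j}\}$ with $y_{j}\in\bigcap_{i=1}^{j}F_{x_{i}}$. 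Unwinding the $n$-hull chains shows, exactly as in the preceding proof, that no point of $X$ is a $\theta(n)$-complete accumulation point of $\{y_{j}\}$.

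The one step deserving a word — and the only place the direct argument could trip up — is checking that $\{y_{j}\}$ is an infinite set \emph{of regular power}. It is infinite because the sets $\bigcap_{i=1}^{j}F_{x_{i}}$ are decreasing in $j$, so a value attained cofinally often would lie in $F_{x_{i}}\subseteq\overline{U_{x_{i}}}$ for every $i$, contradicting $\bigcap_{i}\overline{U_{x_{i}}}=\emptyset$; and, being an infinite subset of the countable set $X$, it has power $\aleph_{0}$, which is regular. Then weak $S(n)$-$\theta$-closedness supplies a $\theta^{0}(n)$-complete — hence $\theta(n)$-complete — accumulation point of $\{y_{j}\}$, a contradiction.
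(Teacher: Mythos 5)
Your proposal is correct and matches the paper's intent: the corollary is stated without proof precisely because it follows immediately from the preceding theorem once one observes that countable spaces are Lindel\"of and that weakly $S(n)$-$\theta$-closed implies weakly $S(n)$-closed (as noted after the definition). Your additional self-contained argument, including the check that the sequence $\{y_j\}$ is infinite of regular power $\aleph_0$, is a sound elaboration but not needed.
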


\begin{corollary}
Let $X$ be second-countable weakly $S(n)$-$\theta$-closed
$S(n)$-space. Then $X$ is $S(n)$-closed.
\end{corollary}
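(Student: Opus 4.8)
The plan is to obtain this as an immediate consequence of the previous Theorem, after checking that the two hypotheses of the Corollary entail those of the Theorem. First, recall that a second-countable space is Lindel\"of: given an open cover, for each point choose a basic neighbourhood contained in a member of the cover; the countably many basic sets so obtained already cover $X$, and picking one member of the cover above each of them yields a countable subcover. Second, as observed right after the definition of weakly $S(n)$-$\theta$-closed spaces, every $\theta^{0}(n)$-complete accumulation point is a $\theta(n)$-complete accumulation point, so a weakly $S(n)$-$\theta$-closed space is in particular weakly $S(n)$-closed.

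Putting these two remarks together, a second-countable weakly $S(n)$-$\theta$-closed $S(n)$-space $X$ is a Lindel\"of weakly $S(n)$-closed $S(n)$-space, and the Theorem applies verbatim to give that $X$ is $S(n)$-closed.

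If one prefers a self-contained argument, one can simply repeat the proof of the Theorem with a countable base in place of the Lindel\"of property: assuming $X$ is not $S(n)$-closed, fix an open filter $\mathcal F$ with $ad_{\theta^n}\mathcal F=\emptyset$, for each $x\in X$ pick $F_x\in\mathcal F$ and an $n$-hull $U_x$ of $F_x$ with $x\notin\overline{U_x}$, extract from the open cover $\{X\setminus\overline{U_x}\}$ a countable subcover $\{X\setminus\overline{U_{x_i}}\}$ (so $\bigcap_i\overline{U_{x_i}}=\emptyset$), and then choose $y_j\in\bigcap_{i=1}^{j}F_{x_i}$, which is nonempty as a finite intersection of members of the filter. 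The countably infinite set $\{y_j\}$ is of regular power and has no $\theta(n)$-complete accumulation point, contradicting weak $S(n)$-closedness. The only point deserving a line of care is this last claim: given a putative $\theta(n)$-complete accumulation point $z$, one uses $\bigcap_i\overline{U_{x_i}}=\emptyset$ to find $i_0$ with $z\notin\overline{U_{x_{i_0}}}$, and then reverses the defining chain of the $n$-hull $U_{x_{i_0}}$ to produce an $n$-hull $W$ of $z$ with $\overline{W}\cap F_{x_{i_0}}=\emptyset$, whence $\overline{W}$ meets $\{y_j\}$ only for $j<i_0$. Beyond this bookkeeping there is no real obstacle; the result is genuinely a corollary of the Theorem.
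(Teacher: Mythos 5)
Your proposal is correct and matches the paper's intent: the paper states this corollary without proof, immediately after the theorem that a Lindel\"of weakly $S(n)$-closed $S(n)$-space is $S(n)$-closed, and the intended derivation is exactly yours --- second-countable implies Lindel\"of, and (as the paper notes after the definition) weakly $S(n)$-$\theta$-closed implies weakly $S(n)$-closed. Your optional self-contained argument also reproduces the paper's proof of that theorem, with the chain-reversal detail correctly filled in.
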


Recall, that  a space is linearly Lindel\"{e}of (finally compact
in the sense of accumulation points) if every increasing open
cover $\{U_{\alpha} : \alpha\in \kappa \}$ has a countable
subcover (by increasing, we mean that $\alpha<\beta<\kappa$
implies $U_{\alpha}\subseteq U_{\beta}$).

\begin{theorem}

Let $n>1$ and $X$ be a linearly Lindel\"{e}of weakly
$S(n)$-$\theta$-closed $S(n)$-space. Then $X$ is weakly
$S(n-1)$-closed.
\end{theorem}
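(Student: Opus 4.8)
The plan is to argue by contradiction, treating separately the uncountable and the countable size of a potential counterexample.

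Suppose $X$ is not weakly $S(n-1)$-closed, and fix an infinite set $F\subseteq X$ of regular power $\kappa=|F|$ with no $\theta(n-1)$-complete accumulation point. Then for every $x\in X$ there is an $(n-1)$-hull $V_x$ of $x$ (in particular, an open neighbourhood of $x$) with $|F\cap\overline{V_x}|<\kappa$. First note that a point of complete accumulation of $F$ would automatically be a $\theta(n-1)$-complete accumulation point of $F$, since every $(n-1)$-hull of such a point is a neighbourhood of it and therefore meets $F$ in a set of power $\kappa$; hence $F$ has no point of complete accumulation.

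Assume first $\kappa>\omega$. Enumerate $F=\{f_\xi:\xi<\kappa\}$, put $F_\alpha=\{f_\xi:\xi<\alpha\}$, and, since $\kappa$ is regular, choose for each $x$ an ordinal $\alpha(x)<\kappa$ with $\overline{V_x}\cap F\subseteq F_{\alpha(x)}$. Setting $O_\alpha=\bigcup\{V_x:x\in X,\ \overline{V_x}\cap F\subseteq F_\alpha\}$ one obtains an increasing open cover $\{O_\alpha:\alpha<\kappa\}$ of $X$; since $X$ is linearly Lindel\"{o}f it has a countable subcover, and as the cover is increasing and $\kappa$ is regular uncountable, $X=O_\gamma$ for some $\gamma<\kappa$. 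But then $f_\gamma\in O_\gamma$, so $f_\gamma\in V_x$ for some $x$ with $\overline{V_x}\cap F\subseteq F_\gamma$, forcing $f_\gamma\in F_\gamma$ --- a contradiction. (This case uses only that $X$ is linearly Lindel\"{o}f; one could equally cite the standard fact that in a linearly Lindel\"{o}f space every subset of regular uncountable power has a point of complete accumulation.)

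Now assume $\kappa=\omega$; here being linearly Lindel\"{o}f gives nothing, and I would bring in weak $S(n)$-$\theta$-closedness. Since $X$ is Hausdorff ($S(n)$ with $n>1$), an $\omega$-accumulation point $q$ of $F$ --- a point every neighbourhood of which meets $F$ infinitely --- would again be $\theta(n-1)$-complete, because any $(n-1)$-hull of $q$ is a neighbourhood of $q$ and so meets $F$ in an infinite, hence power-$\kappa$, set; so $F$ has no $\omega$-accumulation point, whence $F$ is a closed discrete subset of $X$. By weak $S(n)$-$\theta$-closedness the countably infinite set $F$ has a $\theta^{0}(n)$-complete accumulation point $p$, so every $n$-hull of $p$ meets $F$ in an infinite set. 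On the other hand $p$ is not a $\theta(n-1)$-complete accumulation point, so there is an $(n-1)$-hull $W$ of $p$, realized by neighbourhoods $W_1,\dots,W_{n-1}=W$ of $p$ with $\overline{W_i}\subseteq W_{i+1}$, for which $S:=F\cap\overline{W}$ is finite; moreover $p\notin F\setminus S$ (if $p\in F$ then $p\in W\subseteq\overline{W}$, so $p\in S$). As $F$ is closed discrete and $S$ is finite, $F\setminus S$ is closed, so $U:=X\setminus(F\setminus S)$ is an open neighbourhood of $p$, and $\overline{W}\cap(F\setminus S)=\emptyset$ gives $\overline{W}\subseteq U$. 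Then $W_1,\dots,W_{n-1},U$ realizes $U$ as an $n$-hull of $p$ with $F\cap U=S$ finite, contradicting that $p$ is a $\theta^{0}(n)$-complete accumulation point of $F$.

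The main obstacle is the case $\kappa=\omega$. For uncountable $\kappa$ the increasing-cover argument above, using only that $X$ is linearly Lindel\"{o}f, settles the statement; the work --- and the roles of the $S(n)$ axiom, of $n>1$, and of weak $S(n)$-$\theta$-closedness --- is concentrated in the countable case. There the crucial step is to extract closed discreteness of $F$ from the failure of $\theta(n-1)$-completeness, after which closed discreteness is exactly what allows an $(n-1)$-hull of $p$ carrying only finitely many points of $F$ on its closure to be enlarged to an $n$-hull of $p$ carrying only finitely many points of $F$ --- in conflict with the $\theta^{0}(n)$-complete accumulation point provided by weak $S(n)$-$\theta$-closedness. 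The only delicate bookkeeping is keeping track of the realizing chains of the hulls and verifying $p\notin F\setminus S$.
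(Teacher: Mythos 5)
Your proof is correct and follows essentially the same route as the paper's: reduce to a countable witness (this is exactly where linear Lindel\"{o}fness enters, via complete accumulation points for sets of uncountable regular power) and then enlarge an $(n-1)$-hull with small trace on the witness set to an $n$-hull with small trace, contradicting the $\theta^{0}(n)$-complete accumulation point supplied by weak $S(n)$-$\theta$-closedness. Your write-up is in fact more careful than the paper's sketch, which tacitly takes the $(n-1)$-hull's closure to miss the witness set entirely and uses without comment that the set is closed and discrete --- both points you explicitly justify.
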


\begin{proof}
Suppose the contrary. Then there is a countable set $S$ such that
the set $S$ has not a $\theta(n-1)$-complete accumulation point.
Let $x\in X$ and $U$ be a $(n-1)$-hull of $x$ such that
$\overline{U}\bigcap S=\emptyset$. For every $y\in \overline{U}$
there exists a neighborhood $W_{y}$ of $y$ such that $W_{y}\bigcap
S=\emptyset$. Consider a open set $W=\bigcup_{y\in \overline{U}}
W_{y}$. Then $\overline{U}\subseteq W$ and $W$ is a $n$-hull of
the point $x$. Note that $W\bigcap S=\emptyset$. It is follows
that $x$ is not an $\theta^0(n)$-complete accumulation point of
$S$. This contradicts fact that $X$ is a weakly
$S(n)$-$\theta$-closed.
\end{proof}

\begin{corollary}
Let $n>1$ and $X$ be a Lindel\"{e}of weakly $S(n)$-$\theta$-closed
$S(n)$-space. Then $X$ is $S(n-1)$-closed.
\end{corollary}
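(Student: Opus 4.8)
The plan is to obtain the corollary by composing the two theorems immediately preceding it, together with the elementary observation that $S(n)$-spaces are $S(n-1)$-spaces.

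First I would note that every Lindel\"of space is linearly Lindel\"of, since an increasing open cover is in particular an open cover and hence has a countable subcover. Thus $X$ is a linearly Lindel\"of weakly $S(n)$-$\theta$-closed $S(n)$-space with $n>1$, so the previous theorem applies verbatim and gives that $X$ is weakly $S(n-1)$-closed.

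Next I would check that $X$, being an $S(n)$-space, is also an $S(n-1)$-space. For this it suffices to see that $cl_{\theta^{n-1}}M\subseteq cl_{\theta^n}M$ for every $M\subseteq X$: if $x\notin cl_{\theta^n}M$, witnessed by open neighbourhoods $U_1,\dots,U_n$ of $x$ with $clU_i\subseteq U_{i+1}$ and $clU_n\cap M=\emptyset$, then $U_n\cap M=\emptyset$ and $clU_{n-1}\subseteq U_n$, so $clU_{n-1}\cap M=\emptyset$ and the truncated chain $U_1,\dots,U_{n-1}$ witnesses $x\notin cl_{\theta^{n-1}}M$. Consequently any two distinct points of $X$ that are $S(n)$-separated are $S(n-1)$-separated (and since $n-1\ge 1$ this separation relation is the symmetric one), so $X$ is an $S(n-1)$-space.

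Finally, $X$ is now a Lindel\"of weakly $S(n-1)$-closed $S(n-1)$-space, and $n-1\ge 1$, so the theorem stating that a Lindel\"of weakly $S(m)$-closed $S(m)$-space is $S(m)$-closed, applied with $m=n-1$, yields that $X$ is $S(n-1)$-closed. I do not expect any genuine obstacle in this argument; the only thing to watch is the index shift $n\mapsto n-1$ and the verification that the hypothesis $n>1$ is precisely what makes both invocations legitimate.
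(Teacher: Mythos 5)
Your proposal is correct and follows exactly the route the paper intends: the corollary is stated without proof immediately after the linearly Lindel\"of theorem, and is meant to be the composition of that theorem (Lindel\"of $\Rightarrow$ linearly Lindel\"of, giving weakly $S(n-1)$-closed) with the earlier theorem that a Lindel\"of weakly $S(m)$-closed $S(m)$-space is $S(m)$-closed, applied with $m=n-1$. Your verification that an $S(n)$-space is an $S(n-1)$-space via $cl_{\theta^{n-1}}M\subseteq cl_{\theta^n}M$ is a sound and worthwhile detail to make explicit.
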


\bigskip

 In ~{\cite{dg}} raised the question (Problem 5) about the product of $U$-$\theta$-closed spaces.
 Namely, it is required to prove or to disprove that the product of $U$-$\theta$-closed spaces is feebly compact.
 In particular, it was not known if every Lindel\"{e}of
 $U$-$\theta$-closed space is $H$-closed.

In ~{\cite{osip4}}, two Urysohn $U$-$\theta$-closed spaces whose
product is not feebly compact are constructed. Thus, the question
is negatively solved.

\begin{corollary}
Let $X$ be a Lindel\"{e}of $U$-$\theta$-closed Urysohn space. Then
$X$ is $H$-closed.
\end{corollary}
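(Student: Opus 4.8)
The plan is to translate the statement into the $S(n)$-scale and then chain two results already established in the paper. By the dictionary recorded in Section~2, a Urysohn space is exactly an $S(2)$-space, $U$-$\theta$-closedness coincides with $S(2)$-$\theta$-closedness, and $H$-closedness coincides with $S(1)$-closedness. Hence the assertion to be proved reads: every Lindel\"{o}f $S(2)$-$\theta$-closed $S(2)$-space is $S(1)$-closed.

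The first step is to apply Theorem~\ref{tm2} with $n=2$: since $X$ is an $S(2)$-$\theta$-closed $S(2)$-space, it is weakly $S(2)$-$\theta$-closed. The second step is to feed this into the preceding Corollary (the one asserting that a Lindel\"{o}f weakly $S(n)$-$\theta$-closed $S(n)$-space is $S(n-1)$-closed whenever $n>1$), again taken with $n=2$. The hypothesis $n>1$ is met, $X$ is an $S(2)$-space by assumption, and $X$ is Lindel\"{o}f by assumption, so the Corollary applies and yields that $X$ is $S(1)$-closed, i.e. $H$-closed, as required.

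There is essentially no obstacle: the real content was carried out earlier, in Theorem~\ref{tm2} and, before it, in the theorem on linearly Lindel\"{o}f weakly $S(n)$-$\theta$-closed spaces together with the theorem that a Lindel\"{o}f weakly $S(n)$-closed $S(n)$-space is $S(n)$-closed. The only point requiring a moment's attention is the implicit use of the class inclusions an $S(2)$-space is an $S(1)$-space and weakly $S(1)$-$\theta$-closed refines to the data needed by the intermediate step so that the passage from $n=2$ down to $n=1$ is legitimate; this is immediate from the definitions and was already used in establishing the Corollary being invoked. Thus the proof amounts to specializing the general machinery to $n=2$ and reading off the classical names.
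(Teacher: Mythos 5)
Your proposal is correct and follows exactly the route the paper intends: Theorem~\ref{tm2} with $n=2$ gives that the $U$-$\theta$-closed ($=S(2)$-$\theta$-closed) Urysohn space is weakly $S(2)$-$\theta$-closed, and the immediately preceding corollary (Lindel\"{o}f weakly $S(n)$-$\theta$-closed $S(n)$-space is $S(n-1)$-closed, $n>1$) with $n=2$ then yields $S(1)$-closedness, i.e.\ $H$-closedness. The paper leaves this chain implicit, and your spelled-out version, including the remark that an $S(2)$-space is an $S(1)$-space, is exactly what is needed.
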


\begin{remark}
Observe that every $H$-closed space is feebly compact. By
corollary 3.11,  product of Lindel\"{e}of $U$-$\theta$-closed
spaces is feebly compact.
\end{remark}

\begin{corollary}
Let $n>1$ and $X$ be a Lindel\"{e}of weakly $S(n)$-$\theta$-closed
$S(n)$-space. Then $X$ is $S(n)$-$\theta$-closed.
\end{corollary}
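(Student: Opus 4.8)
The plan is to deduce this at once from the earlier corollary asserting that a Lindel\"of weakly $S(n)$-$\theta$-closed $S(n)$-space (with $n>1$) is $S(n-1)$-closed, combined with the result of Dikranjan and Giuli that $S(m)$-closedness implies $S(m+1)$-$\theta$-closedness. Indeed, since $X$ satisfies the hypotheses of that corollary, $X$ is $S(n-1)$-closed; and since $X$ is an $S(n)$-space, applying the cited implication with $m=n-1$ yields that $X$ is $S(n)$-$\theta$-closed. That is the entire argument, so there is really no obstacle; the only thing worth keeping straight is that the two inputs are available because $n>1$ (so $n-1\ge 1$) and because an $S(n)$-space is in particular an $S(n-1)$-space, which is precisely what made the earlier corollary applicable.

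For a self-contained proof one would instead verify the cover characterization directly, namely that every $S(n-1)$-cover of $X$ has a finite subcover. Assume $\mathcal U=\{U_\alpha\}$ is an $S(n-1)$-cover with no finite subcover. The sets $Int_{\theta^{n-1}}U_\alpha$ are open and, by the definition of an $S(n-1)$-cover, they cover $X$; by Lindel\"ofness pick a countable subfamily $\{U_{\alpha_i}\}_{i\in\omega}$ with $X=\bigcup_{i}Int_{\theta^{n-1}}U_{\alpha_i}$. This subfamily still has no finite subcover (a finite subcover of it would be one of $\mathcal U$), so for each $j$ one may choose $y_j\in X\setminus\bigcup_{i=1}^{j}U_{\alpha_i}$. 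The set $F=\{y_j:j\in\omega\}$ is infinite --- a value occurring cofinally often would lie outside $\bigcup_i U_{\alpha_i}=X$ --- and has regular power $\aleph_0$, so by weak $S(n)$-$\theta$-closedness $F$ has a $\theta^0(n)$-complete accumulation point $x$.

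Choose $m$ with $x\in Int_{\theta^{n-1}}U_{\alpha_m}$. Unwinding $x\notin cl_{\theta^{n-1}}(X\setminus U_{\alpha_m})$ produces open neighbourhoods $V_1,\dots,V_{n-1}$ of $x$ with $\overline{V_i}\subseteq V_{i+1}$ for $i<n-1$ and $\overline{V_{n-1}}\subseteq U_{\alpha_m}$; thus $V_1,\dots,V_{n-1},U_{\alpha_m}$ is a chain exhibiting $U_{\alpha_m}$ as an $n$-hull of $x$. Consequently $|F\cap U_{\alpha_m}|=|F|=\aleph_0$. But $y_j\notin U_{\alpha_m}$ for every $j\ge m$, so $F\cap U_{\alpha_m}\subseteq\{y_0,\dots,y_{m-1}\}$ is finite, a contradiction. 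The main --- and only mild --- point in this route is the bookkeeping that preserves ``no finite subcover'' when passing to the countable subfamily and that guarantees $F$ is genuinely an infinite set of regular power.
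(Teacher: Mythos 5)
Your first paragraph is exactly the paper's (implicit) proof: the corollary is left as an immediate consequence of the preceding corollary (Lindel\"of $+$ weakly $S(n)$-$\theta$-closed $\Rightarrow$ $S(n-1)$-closed) together with the cited Dikranjan--Giuli implication that $S(n-1)$-closedness implies $S(n)$-$\theta$-closedness, and your argument is correct. The self-contained second argument is also sound (the key point, that $Int_{\theta^{n-1}}U_{\alpha}$ is open and that membership in it upgrades $U_{\alpha}$ to an $n$-hull of the point, is handled correctly), but it is not needed.
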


Thus, for Lindel\"{e}of $S(n)$-spaces, classes of the considered
spaces are presented in the following diagram:

\bigskip

\begin{tabular}{ccc}
compact Hausdorff space &  $\Longleftrightarrow$ &     weakly
$S(1)$-$\theta$-closed\\
$\Downarrow$&&                                     $\Downarrow$\\
 $H$-closed &  $\Longleftrightarrow$ &     weakly $H$-closed\\
 $\Updownarrow$ &&                   $\Updownarrow$\\
$U$-$\theta$-closed &   $\Longleftrightarrow$   &     weakly $U$-$\theta$-closed \\
$\Downarrow$            &           & $\Downarrow$\\
$U$-closed &   $\Longleftrightarrow$  &     weakly $U$-closed\\
 $\Updownarrow$ &&                        $\Updownarrow$\\
 $\dots$ &$\dots$ &$\dots$\\
 $\Updownarrow$ &&
$\Updownarrow$\\
 $S(n-1)$-$\theta$-closed &   $\Longleftrightarrow$  &     weakly $S(n-1)$-$\theta$-closed\\
 $\Downarrow$       &&                 $\Downarrow$\\
$S(n-1)$-closed   & $\Longleftrightarrow$   &    weakly $S(n-1)$-closed\\
 $\Updownarrow$   &&                        $\Updownarrow$\\
$S(n)$-$\theta$-closed &   $\Longleftrightarrow$  &     weakly $S(n)$-$\theta$-closed\\
 $\Downarrow$   &&                        $\Downarrow$\\
$S(n)$-closed &   $\Longleftrightarrow$ &     weakly $S(n)$-closed\\
\end{tabular}

\bigskip
{\bf Question 1.} Does there exists a non $S(n)$-$\theta$-closed
 Lindel\"{e}of $S(n)$-closed space ($n>1$)?

\section{Characterizations $S(n)$-closed and $S(n)$-$\theta$-closed spaces}

Now for every $n\in \mathbb N$ we introduce an operator of
$\theta_0^n$-closure; for $M\subseteq X$ and $x\in X$ $x\notin
cl_{\theta_{0}^{n}} M$ if there is a $n$-hull $U$ of $x$ such that
$U\bigcap M=\emptyset$. A set $M\subseteq X$ is
$\theta_{0}^{n}$-closed if $M=cl_{\theta_{0}^{n}} M$.

\begin{definition}
A subset $M$ of a topological space $X$ is an
$S(n)$-$\theta_0^n$-set if every $S(n)$-cover $\gamma$ with
respect to $M$ ($M\subseteq \bigcup \{ Int_{\theta^n} U_{\alpha}:
U_{\alpha}\in \gamma\}$) by open sets of $X$ has a finite
subfamily which covers $M$ with the $\theta_0^n$-closures of its
members.
\end{definition}

\begin{definition}
The set $A$ is weakly $\theta(n)$-converge to the set $B$ if for
any $S(n-1)$-cover $\gamma=\{U_{\alpha}\}$ of $B$ there exists a
finite family $\{ U_{\alpha_i}\}_{i=1}^k \subseteq \gamma$ such
that $|A\setminus Int(\bigcup_{i=1}^k
\overline{U_{\alpha_i}})|<|A|$.
\end{definition}

\begin{theorem}
\label{tm3}
 For $n\in \mathbb N$, a $S(n)$-space $X$ is
$S(n)$-closed if and only if any infinity set $A\subseteq X$
weakly $\theta(n)$-converge to the set $B$ of its
$\theta(n)$-complete accumulation points.
\end{theorem}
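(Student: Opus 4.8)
The plan is to prove both implications by contradiction, exploiting the characterization (from Porter--Votaw, quoted in Section 2) that an $S(n)$-space $X$ is $S(n)$-closed if and only if every $S(n-1)$-cover $\{U_\alpha\}$ has finitely many members with $X=\bigcup_{i=1}^k\overline{U_{\alpha_i}}$. Note that the definition of ``weakly $\theta(n)$-converge'' uses precisely $S(n-1)$-covers of the target set $B$, so the two notions are tailored to each other.

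For the forward direction, assume $X$ is $S(n)$-closed and let $A\subseteq X$ be infinite; set $B$ to be the set of $\theta(n)$-complete accumulation points of $A$. Let $\gamma=\{U_\alpha\}$ be any $S(n-1)$-cover of $B$. The key step is to enlarge $\gamma$ to an $S(n-1)$-cover of all of $X$: for each point $x\in X\setminus B$, since $x$ is not a $\theta(n)$-complete accumulation point of $A$, there is an $n$-hull $V_x$ of $x$ with $|A\cap\overline{V_x}|<|A|$; each $V_x$ witnesses that $x\in Int_{\theta^{n-1}}V_x$ (unwinding the $n$-hull definition gives a chain $V^{(1)}_x\subseteq\cdots\subseteq V^{(n)}_x=V_x$ with $cl\,V^{(i)}_x\subseteq V^{(i+1)}_x$, so $x$ lies $\theta^{n-1}$-inside $V_x$). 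Then $\gamma'=\gamma\cup\{V_x:x\in X\setminus B\}$ is an $S(n-1)$-cover of $X$. By $S(n)$-closedness choose a finite subfamily whose closures cover $X$; separate it into the part coming from $\gamma$, say $\{U_{\alpha_i}\}_{i=1}^k$, and the part $\{V_{x_j}\}_{j=1}^m$ coming from the added sets. Then $X\setminus\bigcup_{i=1}^k\overline{U_{\alpha_i}}\subseteq\bigcup_{j=1}^m\overline{V_{x_j}}$, hence $A\setminus\bigcup_{i=1}^k\overline{U_{\alpha_i}}\subseteq\bigcup_{j=1}^m(A\cap\overline{V_{x_j}})$, a finite union of sets each of cardinality $<|A|$; since $|A|$ is a regular cardinal this union has cardinality $<|A|$. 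It remains to pass from the complement of the closed set $\bigcup\overline{U_{\alpha_i}}$ to the complement of its interior: $A\setminus Int(\bigcup_{i=1}^k\overline{U_{\alpha_i}})$ contains the boundary points, so one more argument is needed here — one covers each boundary point of $\bigcup\overline{U_{\alpha_i}}$ (which is not in $B$, after shrinking if a boundary point were a complete accumulation point, contradicting $A\cap\overline{U_{\alpha_i}}$-considerations) by a further $n$-hull missing most of $A$, or more cleanly, one observes the finitely many added $\overline{V_{x_j}}$ already absorb this boundary. This bookkeeping step — controlling $Int$ versus closure — is the main technical obstacle and must be handled with care, possibly by choosing the $V_x$ to be $n$-hulls that sit strictly inside larger $n$-hulls so that their closures have empty-ish interior contribution.

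For the converse, assume every infinite $A$ weakly $\theta(n)$-converges to its set $B$ of $\theta(n)$-complete accumulation points, and suppose toward a contradiction that $X$ is not $S(n)$-closed; by the Porter--Votaw characterization there is an $S(n-1)$-cover $\{U_\alpha\}$ with $X\neq\bigcup_{i=1}^k\overline{U_{\alpha_i}}$ for every finite subfamily. One then builds, by transfinite recursion along a suitable regular cardinal $\kappa$, a set $A=\{y_\xi:\xi<\kappa\}$ where at stage $\xi$ one picks $y_\xi\notin\bigcup\{\overline{U_\alpha}:U_\alpha\text{ used so far}\}$, choosing $\kappa$ large enough (and regular) that $A$ is ``of regular power'' in the required sense. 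The target set $B$ of $\theta(n)$-complete accumulation points of $A$ must be covered by some $S(n-1)$-subcover extracted from $\{U_\alpha\}$ (using that $\{U_\alpha\}$ is an $S(n-1)$-cover of all of $X\supseteq B$), and weak $\theta(n)$-convergence then forces all but $<\kappa$ many $y_\xi$ into $Int(\bigcup_{i=1}^k\overline{U_{\alpha_i}})\subseteq\bigcup_{i=1}^k\overline{U_{\alpha_i}}$ — contradicting the construction, which kept $\kappa$-many $y_\xi$ out of every such finite union. The delicate point in this direction is arranging the recursion so that the set $A$ genuinely has complete accumulation points covered by finitely many $U_\alpha$; one resolves this by first noting that if $B$ were empty then $A$ would be a witness against property (*)-type behavior, but weak $\theta(n)$-convergence applied with $B=\emptyset$ (covered vacuously by the empty $S(n-1)$-cover) would give $|A\setminus Int(\emptyset)|=|A\setminus\emptyset|=|A|<|A|$, absurd — so $B\neq\emptyset$ and the argument proceeds. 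I expect the cardinal-arithmetic setup (choosing $\kappa$ and verifying ``regular power'') together with the $Int$-versus-closure discrepancy to be the only genuinely subtle points; the rest is a routine assembly of the quoted characterizations.
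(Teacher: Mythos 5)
Your necessity argument is essentially the paper's: enlarge the given $S(n-1)$-cover of $B$ by $n$-hulls $V_x$ with $|A\cap\overline{V_x}|<|A|$ at the points $x\notin B$, extract a finite subfamily by $S(n)$-closedness, and split it into the two parts. The ``$Int$ versus closure'' obstacle you flag has a one-line resolution that you half-state but do not commit to: $\bigcup_{j=1}^m\overline{V_{x_j}}$ is closed, so its complement is open and contained in $\bigcup_{i=1}^k\overline{U_{\alpha_i}}$, hence in $Int\bigl(\bigcup_{i=1}^k\overline{U_{\alpha_i}}\bigr)$; therefore $A\setminus Int\bigl(\bigcup_{i=1}^k\overline{U_{\alpha_i}}\bigr)\subseteq A\cap\bigcup_{j=1}^m\overline{V_{x_j}}$, a finite union of sets each of cardinality $<|A|$ (no regularity of $|A|$ is needed for a finite union). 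Your alternative device of shrinking the $V_x$ so their closures have ``empty-ish interior contribution'' is neither needed nor meaningful; with the observation above this direction is complete and agrees with the paper.

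The genuine gap is in sufficiency. The transfinite recursion ``pick $y_\xi$ outside the closures of the $U_\alpha$ used so far'' does not produce the set you need, for two reasons. First, the recursion can die: the hypothesis only guarantees that no \emph{finite} union of closures covers $X$, so after infinitely many $U_\alpha$ have been used there may be nothing left to pick, and you cannot run the recursion to a large regular $\kappa$. Second, and decisively, the final contradiction needs $|A\setminus\bigcup_{i=1}^k\overline{U_{\alpha_i}}|=|A|$ for \emph{every} finite subfamily of $\gamma$, since the finite family handed to you by weak $\theta(n)$-convergence is arbitrary and need not be one you diagonalized against; a single $\overline{U_\alpha}$ never ``used'' could contain all of $A$. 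The recursion must be replaced by a minimality argument. The paper works with an open $S(n)$-filter $\varphi$ with empty adherence, takes $V_0\in\varphi$ of minimal cardinality as the set $A$, observes that $\{X\setminus\overline{V_\alpha}\}$ is an $S(n-1)$-cover of the set $B$ of $\theta(n)$-complete accumulation points of $V_0$, and turns the conclusion of weak convergence into $|V_0\cap\bigcap_i V_{\alpha_i}|<|V_0|$, contradicting minimality. The same trick works in your cover language: let $F_0$ be a finite subfamily minimizing $|X\setminus\bigcup_{U\in F}\overline{U}|$ and set $A=X\setminus\bigcup_{U\in F_0}\overline{U}$; then $|A\setminus\bigcup_{U\in F}\overline{U}|=|A|$ holds for every finite $F$ automatically (with a separate trivial argument when this minimum is finite). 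Your remark that the hypothesis rules out $B=\emptyset$ is correct but does not repair the construction.
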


\begin{proof}
Necessary. Let $X$ be $S(n)$-closed space and $A\subseteq X$. Take
any $S(n-1)$-cover $\gamma$ of $B$ where $B$ is the set of
$\theta(n)$-complete accumulation points of $A$. For each point
$x\notin B$ we take an $n$-hull $O(x)$ such that
$|\overline{O(x)}\bigcap A|<|A|$. Then we have an $S(n-1)$-cover
$\gamma'=\gamma\bigcup \{O(x) : x\notin B\}$ of $X$. As the space
$X$ is $S(n)$-closed there are finite families
$\{U_{i}\}_{i=1}^s\subseteq \gamma$ and $\{O(x_j)\}_{j=1}^{k}$
such that $(\bigcup_{i=1}^s \overline{U_{i}})\bigcup
(\bigcup_{j=1}^k \overline{O(x_{j})})=X$. Note that $A\setminus
Int(\bigcup_{i=1}^s \overline{U_{i}})\subseteq \bigcup_{j=1}^{k}
\overline{O(x_{j})}$.
 As $|A\bigcap(\bigcup_{j=1}^k \overline{O(x_{j}})|<|A|$ we have
$|A\setminus Int(\bigcup_{i=1}^s \overline{U_{i}})|<|A|$. Thus $A$
weakly $\theta(n)$-converge to the set $B$.

Note that $B$ is an $S(n)$-$\theta_0^n$-set. Really, $(A\bigcap
Int(\bigcup_{i=1}^s \overline{U_{i}}))\bigcap \overline{S(x)}\neq
\emptyset$ for every $x\in B$ and for any $n$-hull $S(x)$ of the
point $x$. It is follows that $S(x)\bigcap (\bigcup_{i=1}^s
U_{i})\neq \emptyset$ and $x$ is contained  in
$\theta_0^n$-closure of $\bigcup_{i=1}^s U_{i}$. Thus $B\subseteq
cl_{\theta_{0}^{n}} \bigcup_{i=1}^s U_{i}$.

Sufficiency. Let $\varphi=\{V_{\alpha}\}$ be open $S(n)$-filter on
$X$. Assume that $ad$ $\varphi=\emptyset$. Choose $V_{0}\in
\varphi$ such that $|V_0|=inf\{|V_{\alpha}|: V_{\alpha}\in \varphi
\}$. Since $\bigcap \overline{V_{\alpha}}=\emptyset$ we have
$\xi=\{U_{\alpha}: U_{\alpha}=X\setminus \overline{V_{\alpha}}\}$
is an $S(n-1)$-cover of $B$ where $B$ is the set of
$\theta(n)$-complete accumulation points of $V_0$. By condition,
there exists a finite family $\{ U_{\alpha_i}\}_{i=1}^k \subseteq
\xi$ such that $|V_0\setminus Int(\bigcup_{i=1}^k
\overline{U_{\alpha_i}})|<|V_0|$. Consider $V_{\alpha_i}\in
\varphi$ such that $U_{\alpha_i}=X\setminus V_{\alpha_i}$. Let
$V=\bigcap_{i=1}^k V_{\alpha_i}$ then $V\bigcap V_0\subseteq
V_0\setminus Int(\bigcup_{i=1}^k \overline{U_{\alpha_i}})$ and
$|V\bigcap V_0|<|V_0|$. This contradicts our choice of $V_{0}$.
Thus $X$ is $S(n)$-closed space.

\end{proof}

\begin{corollary}
Let $X$ be a $S(n)$-closed space and $A$ be an infinity set of
$X$. Then a set $B$ of $\theta(n)$-complete accumulation points of
$A$ is an $S(n)$-$\theta_0^n$-set.
\end{corollary}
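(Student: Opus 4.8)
The plan is to observe that this statement is essentially already extracted inside the proof of Theorem~\ref{tm3}, so the task is to isolate that portion of the argument and present it cleanly as a self-contained deduction. First I would invoke Theorem~\ref{tm3}: since $X$ is $S(n)$-closed, the infinite set $A$ weakly $\theta(n)$-converges to the set $B$ of its $\theta(n)$-complete accumulation points. The goal is then to show $B$ is an $S(n)$-$\theta_0^n$-set, i.e.\ that every open $S(n)$-cover $\gamma = \{U_\alpha\}$ of $B$ with respect to $M=B$ (meaning $B \subseteq \bigcup_\alpha Int_{\theta^n} U_\alpha$) admits a finite subfamily whose $\theta_0^n$-closures cover $B$.

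Next I would reduce to an $S(n-1)$-cover. Given the $S(n)$-cover $\gamma$ of $B$, for each $x \in B$ choose $U_{\alpha(x)} \in \gamma$ with $x \in Int_{\theta^n} U_{\alpha(x)}$; by definition of the $\theta^n$-interior there is an $n$-hull of $x$ contained in $U_{\alpha(x)}$, so the collection of these $n$-hulls (indexed over $x \in B$) forms an $S(n-1)$-cover $\gamma'$ of $B$ refining $\gamma$. Apply weak $\theta(n)$-convergence of $A$ to $B$ with the cover $\gamma'$: there is a finite subfamily, say with members contained in $U_1,\dots,U_s \in \gamma$, such that $|A \setminus Int(\bigcup_{i=1}^s \overline{U_i})| < |A|$. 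Hence $|A \cap Int(\bigcup_{i=1}^s \overline{U_i})| = |A|$, since $A$ is infinite of regular power and a regular cardinal is not the sum of fewer smaller cardinals.

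The key step is then to verify that these $U_1,\dots,U_s$ do the job, i.e.\ $B \subseteq cl_{\theta_0^n}\big(\bigcup_{i=1}^s U_i\big)$. Fix $x \in B$ and let $S(x)$ be any $n$-hull of $x$. Since $x$ is a $\theta(n)$-complete accumulation point of $A$ we have $|A \cap \overline{S(x)}| = |A|$; intersecting with the full-power set $A \cap Int(\bigcup_{i=1}^s \overline{U_i})$ and using regularity of $|A|$ again, we get $A \cap \overline{S(x)} \cap Int(\bigcup_{i=1}^s \overline{U_i}) \neq \emptyset$, so in particular $\overline{S(x)} \cap \bigcup_{i=1}^s \overline{U_i} \neq \emptyset$. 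A short chasing of the $n$-hull inclusions (if $W_1 \subseteq \cdots \subseteq W_n = S(x)$ with $\overline{W_j} \subseteq W_{j+1}$, then $\overline{W_{n-1}}$ meets some $U_i$, hence $W_n \cap U_i \neq \emptyset$) shows $S(x) \cap \big(\bigcup_{i=1}^s U_i\big) \neq \emptyset$. As $S(x)$ was an arbitrary $n$-hull of $x$, this says exactly $x \in cl_{\theta_0^n}\big(\bigcup_{i=1}^s U_i\big)$, and since $U_1,\dots,U_s$ lie in $\gamma$, the $\theta_0^n$-closures of this finite subfamily cover $B$.

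The main obstacle I anticipate is purely bookkeeping: making the passage from the $S(n)$-cover $\gamma$ to the $S(n-1)$-cover $\gamma'$ precise (keeping track of which original member of $\gamma$ each refining $n$-hull sits inside), and being careful that all the "full cardinality" intersection arguments genuinely use that $|A|$ is regular — without regularity one could not conclude that a finite union failing to swallow $A$ leaves a piece of full power, nor that two sets of power $|A|$ inside $A$ must meet. Once those two points are handled, the rest is a direct unwinding of the definitions of $\theta(n)$-complete accumulation point, $n$-hull, and $cl_{\theta_0^n}$, exactly as carried out in the second paragraph of the proof of Theorem~\ref{tm3}.
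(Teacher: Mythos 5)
Your proof is correct and follows essentially the same route as the paper, which establishes this corollary inside the necessity half of the proof of Theorem~\ref{tm3}: produce a finite subfamily $U_1,\dots,U_s$ of the given cover with $|A\setminus Int(\bigcup_{i=1}^{s}\overline{U_i})|<|A|$, then use that each $x\in B$ is a $\theta(n)$-complete accumulation point of $A$ to force every $n$-hull $S(x)$ to meet $\bigcup_{i=1}^{s}U_i$, so that $B$ is covered by the $\theta_0^n$-closures of the $U_i$ (your extra step of refining the $S(n)$-cover of $B$ to an $S(n-1)$-cover of $n$-hulls just makes explicit a reduction the paper leaves implicit). Two cosmetic quibbles: the nonemptiness of $A\cap\overline{S(x)}\cap Int(\bigcup_{i=1}^{s}\overline{U_i})$ needs only that a subset of $A$ of full power cannot be contained in one of smaller power (not that any two full-power subsets of $A$ meet, which is false even for regular $|A|$), and the hull-chasing is cleaner stated as: the open set $Int(\bigcup_{i=1}^{s}\overline{U_i})$ meets $\overline{S(x)}$ and hence meets $S(x)$, and the open set $S(x)$ meets some $\overline{U_i}$ and hence meets $U_i$.
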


\begin{definition}
The set $A$ is  $\theta^{0}(n)$-converge to the set $B$ if for any
$S(n-1)$-cover $\gamma=\{U_{\alpha}\}$ of $B$ there exists a
finite family $\{ U_{\alpha_i}\}_{i=1}^k \subseteq \gamma$ such
that $|A\setminus \bigcup_{i=1}^{k} U_{\alpha_i}|<|A|$.
\end{definition}

\begin{theorem}
For $n\in \mathbb N$, a $S(n)$-space $X$ is $S(n)$-$\theta$-closed
if and only if any infinity set $A\subseteq X$
$\theta^{0}(n)$-converge to the set $B$ of its
$\theta^{0}(n)$-complete accumulation points.
\end{theorem}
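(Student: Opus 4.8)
The plan is to mimic the structure of the proof of Theorem~\ref{tm3}, replacing the $\theta(n)$-complete accumulation machinery by the $\theta^0(n)$-complete accumulation machinery, and replacing $S(n)$-closedness (characterized via $\overline{U_\alpha}$-subcovers of $S(n-1)$-covers) by $S(n)$-$\theta$-closedness (characterized, by the Dikranjan--Giuli theorem quoted in Section~2, via the existence of a \emph{finite subcover} of any $S(n-1)$-cover). So the dictionary is: everywhere Theorem~\ref{tm3} writes $\overline{U_i}$ and $Int(\bigcup\overline{U_i})$, here one writes $U_i$ and $\bigcup U_i$; everywhere it uses ``$|\overline{O(x)}\cap A|<|A|$'' for an $n$-hull $O(x)$, here one uses ``$O(x)\cap A<|A|$'' (still with $O(x)$ an $n$-hull, since an $S(n-1)$-cover is still required).

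For necessity, I would let $X$ be $S(n)$-$\theta$-closed, $A$ an infinite subset, and $B$ the set of $\theta^0(n)$-complete accumulation points of $A$. Given an $S(n-1)$-cover $\gamma$ of $B$, for each $x\notin B$ choose an $n$-hull $O(x)$ of $x$ with $|O(x)\cap A|<|A|$; then $\gamma'=\gamma\cup\{O(x):x\notin B\}$ is an $S(n-1)$-cover of $X$. By the characterization of $S(n)$-$\theta$-closedness (condition (2) in the Dikranjan--Giuli list), $\gamma'$ has a finite subcover $\{U_i\}_{i=1}^s\cup\{O(x_j)\}_{j=1}^k$ with $\{U_i\}\subseteq\gamma$. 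Then $A\setminus\bigcup_{i=1}^s U_i\subseteq\bigcup_{j=1}^k O(x_j)$, and since $|A\cap\bigcup_{j=1}^k O(x_j)|<|A|$ (a finite union of sets meeting $A$ in fewer than $|A|$ points, $|A|$ being a regular cardinal — this is where regularity of the power of $A$ is used, exactly as in Theorem~\ref{tm1}), we conclude $|A\setminus\bigcup_{i=1}^s U_i|<|A|$, i.e. $A$ is $\theta^0(n)$-converges to $B$.

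For sufficiency, I would take a closed $S(n-1)$-filter $\mathcal F=\{V_\alpha\}$ on $X$ (using condition (1) of the Dikranjan--Giuli characterization of $S(n)$-$\theta$-closedness), assume $ad\,\mathcal F=\emptyset$, pick $V_0\in\mathcal F$ of minimal cardinality $\inf\{|V_\alpha|\}$, and observe that $\xi=\{U_\alpha:=X\setminus V_\alpha\}$ is an $S(n-1)$-cover of $X$ (hence of $B$, the set of $\theta^0(n)$-complete accumulation points of $V_0$): indeed, the $S(n-1)$-filter condition says each $x\notin ad\,\mathcal F$ is $S(n-1)$-separated from some $V_\alpha$, i.e. lies in $Int_{\theta^{n-1}}U_\alpha$. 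Applying the hypothesis to $V_0$ and $\xi$ yields a finite $\{U_{\alpha_i}\}_{i=1}^k$ with $|V_0\setminus\bigcup_{i=1}^k U_{\alpha_i}|<|V_0|$; but $V_0\setminus\bigcup U_{\alpha_i}=V_0\cap\bigcap V_{\alpha_i}$, so setting $V=\bigcap_{i=1}^k V_{\alpha_i}\in\mathcal F$ we get $|V\cap V_0|<|V_0|$, contradicting minimality of $|V_0|$. Hence $ad\,\mathcal F\neq\emptyset$ and $X$ is $S(n)$-$\theta$-closed.

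The main obstacle is essentially bookkeeping rather than a genuine new difficulty: one must be careful that an $S(n-1)$-cover, not an $S(n)$-cover, is what the definition of $\theta^0(n)$-convergence demands, and that $n$-hulls (length-$n$ chains of neighborhoods) are the right witnesses for $\theta^0(n)$-complete accumulation points — so the separation ``$x\in Int_{\theta^{n-1}}U_\alpha$'' must be matched against the definition correctly. One subtle point worth checking in necessity is the degenerate case $B=\emptyset$: then $\gamma'$ need not literally ``cover'' unless one notes $\gamma'$ is still an $S(n-1)$-cover of $X$ because the $O(x)$ alone already cover $X$; this works since every point lies in the $\theta^{n-1}$-interior of its own $n$-hull. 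The case $n=0$ should also be checked directly (it reduces to the classical fact that a space is compact iff every infinite set of regular power has a complete accumulation point to which the appropriate finite-subcover condition applies), and I would remark that it is consistent with the $\theta^0(0)=cl$ and ``$0$-hull $=$ open neighborhood'' conventions fixed earlier.
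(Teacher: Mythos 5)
Your proposal is correct and follows exactly the argument the paper intends: the paper omits the proof as being analogous to Theorem~\ref{tm3}, and your adaptation (finite subcovers of $S(n-1)$-covers via the Dikranjan--Giuli characterization in the necessity direction, closed $S(n-1)$-filters with a minimal-cardinality member in the sufficiency direction) is precisely that analogue. The bookkeeping points you flag (that $x$ lies in the $\theta^{n-1}$-interior of its own $n$-hull, and that $V_0\setminus\bigcup U_{\alpha_i}=V_0\cap\bigcap V_{\alpha_i}$ because the filter members are closed) are the right ones and are handled correctly.
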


\section{Characterization minimal $S(n)$-spaces}

A $\mathcal P$ space is minimal $\mathcal P$ if it has no strictly
coarser $\mathcal P$ topology. The terms minimal Urysohn and
minimal regular are abbreviated as $MU$ and $MR$, respectively.

\begin{definition}
The set $A$ is $\theta(n)$-converge to the set $B$ if for any
$S(n-1)$-cover $\gamma=\{U_{\alpha}\}$ of $B$ there exists a
finite family $\{ U_{\alpha_i}\}_{i=1}^k \subseteq \gamma$ such
that $|A\setminus \bigcup_{i=1}^k \overline{U_{\alpha_i}})|<|A|$.
\end{definition}

\begin{theorem}\label{tm4}
For $n\in \mathbb N$, a $S(n)$-space $X$ is minimal $S(n)$-space
if and only if any infinity set $A\subseteq X$
$\theta(n)$-converge to the set $B$ of its $\theta(n)$-complete
accumulation points, and if there exists a point $x$ such that $A$
does not $\theta(n)$-converge to $X\setminus \{x\}$, then $x$ is a
complete accumulation point of $A$.
\end{theorem}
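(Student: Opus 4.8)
The plan is to argue both directions via the known characterization that minimality of an $S(n)$-space $X$ is equivalent to $X$ being $S(n)$-closed \emph{plus} a ``semiregularity-type'' condition that no strictly coarser $S(n)$ topology survives; concretely we use the standard fact (from \cite{jrw}) that a $S(n)$-closed $S(n)$-space $X$ is minimal $S(n)$ iff for every point $x$, whenever $\{X\setminus\{x\}\}$-type families behave like $S(n)$-closed covers, $x$ cannot be ``split off'' unless it is forced to be a limit point. Since Theorem~\ref{tm3} already tells us that ``$S(n)$-closed'' is equivalent to ``every infinite set of regular power $\theta(n)$-converges (in the weak sense) to its set of $\theta(n)$-complete accumulation points'', the first task is to upgrade \emph{weak} $\theta(n)$-convergence to genuine $\theta(n)$-convergence. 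This is exactly the contribution of the extra clause: I would show that for a minimal $S(n)$-space the interior in the statement of weak $\theta(n)$-convergence can be dropped, because minimality forces the space to be ``$\theta^n$-semiregular'' in the sense that $\theta^n$-interiors of finite unions of closures of cover elements already exhaust what the full interiors do, up to a set of size $<|A|$.

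First I would prove necessity. Assume $X$ is minimal $S(n)$. Then in particular $X$ is $S(n)$-closed (every minimal $S(n)$-space is $S(n)$-closed, a standard fact reproved from the filter characterization in Section~2), so by Theorem~\ref{tm3} every infinite $A$ of regular power weakly $\theta(n)$-converges to its set $B$ of $\theta(n)$-complete accumulation points. To get honest $\theta(n)$-convergence, suppose toward a contradiction that for some $S(n-1)$-cover $\gamma=\{U_\alpha\}$ of $B$ and for every finite subfamily $\{U_{\alpha_i}\}_{i=1}^k$ we had $|A\setminus\bigcup_{i=1}^k\overline{U_{\alpha_i}}|=|A|$ while $|A\setminus Int(\bigcup_{i=1}^k\overline{U_{\alpha_i}})|<|A|$. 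The set $D=\bigcup(\overline{U_{\alpha_i}}\setminus Int(\overline{U_{\alpha_i}}))$ sits on boundaries; using it I would construct a strictly coarser $S(n)$-topology $\tau'$ on $X$ by declaring a point of such a boundary to be indistinguishable, via $\theta^n$-closure, from the corresponding $\theta^n$-interior — more precisely, I would pass to the $\theta^n$-semiregularization or a localized version of it at the offending point and check (this is where the $S(n)$-separation axioms must be verified carefully) that the new topology is still $S(n)$ and still has all the points. Coarser-ness contradicts minimality. The second clause of necessity is the ``no splitting'' statement: if $A$ does \emph{not} $\theta(n)$-converge to $X\setminus\{x\}$, then $\{X\setminus\{x\}\}\cup(\text{small }n\text{-hulls})$ fails to admit the required finite subfamily; if moreover $x$ were not a complete accumulation point of $A$, pick an open $U\ni x$ with $|U\cap A|<|A|$ and shrink the topology at $x$ by replacing its neighborhood filter with the one generated by $\{X\setminus\{x\}\}$-traces, again producing a strictly coarser $S(n)$-topology, contradiction.

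For sufficiency, assume $X$ is an $S(n)$-space in which every infinite $A$ of regular power $\theta(n)$-converges to its set $B$ of $\theta(n)$-complete accumulation points, with the stated extra condition. Since $\theta(n)$-convergence implies weak $\theta(n)$-convergence, Theorem~\ref{tm3} gives that $X$ is $S(n)$-closed. Now suppose $\sigma\subsetneq\tau$ is a strictly coarser $S(n)$-topology on $X=(X,\tau)$; pick $x$ and a $\tau$-open set $W\ni x$ that is not $\sigma$-open, and choose a $\sigma$-open $S(n-1)$-cover of $X\setminus\{x\}$ witnessing that, in $(X,\sigma)$, the point $x$ can be separated off. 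Transporting this cover back to $\tau$ (open sets of $\sigma$ are open in $\tau$) and using the hypothesis that $A$ $\theta(n)$-converges to $X\setminus\{x\}$ fails or else the ``then $x$ is a complete accumulation point'' alternative holds, I would derive that $x$ is a $\tau$-complete accumulation point of a suitable witnessing set $A$; but then $x$ cannot be separated in the coarser topology either (complete accumulation is preserved downward to coarser topologies only in a controlled way — this needs the $S(n)$-closedness of $(X,\sigma)$, which follows since $S(n)$-closedness is inherited by coarser $S(n)$-topologies on the same set), a contradiction. Hence no strictly coarser $S(n)$-topology exists and $X$ is minimal $S(n)$.

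The main obstacle is the construction, in the necessity direction, of the strictly coarser $S(n)$-topology from a failure of genuine $\theta(n)$-convergence: one must perform a \emph{local} coarsening (at the boundary points of the offending finite union, or at the point $x$) that is delicate enough to stay inside the class of $S(n)$-spaces — the $\theta^n$-closure operator behaves well under semiregularization for $n=1$ (Hausdorff), but for larger $n$ the interplay between $cl_{\theta^n}$, $Int_{\theta^n}$, and the $S(n)$-separation of arbitrary pairs of points is exactly what can break, so the verification that the coarsened topology is still $S(n)$ and still $T_1$-enough to keep all points distinct is the technical heart of the argument. I expect to handle this by only coarsening the neighborhood filter at a single point $x$ to the filter generated by the traces of $\{X\setminus\{x\}\}$ together with the old $n$-hulls that already missed $A$, which is the minimal modification that destroys $W\in\tau$ while preserving $S(n)$-separation of $x$ from every other point.
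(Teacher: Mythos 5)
There is a genuine gap, and also a misreading of the definitions that sends you down an unnecessary detour. First, the detour: the ``main obstacle'' you identify --- upgrading weak $\theta(n)$-convergence to genuine $\theta(n)$-convergence --- does not exist. Since $Int(S)\subseteq S$, we have $A\setminus S\subseteq A\setminus Int(S)$, so $|A\setminus Int(\bigcup_{i=1}^k\overline{U_{\alpha_i}})|<|A|$ already implies $|A\setminus\bigcup_{i=1}^k\overline{U_{\alpha_i}}|<|A|$: weak $\theta(n)$-convergence is formally the \emph{stronger} condition despite the name, and the paper disposes of this in one line. Your contradiction hypothesis (``$|A\setminus\bigcup\overline{U_{\alpha_i}}|=|A|$ while $|A\setminus Int(\bigcup\overline{U_{\alpha_i}})|<|A|$'') can never occur, so the entire $\theta^n$-semiregularization construction you describe as the technical heart of the necessity direction is addressing a vacuous case.

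The real content lies in the second clause of necessity and in sufficiency, and there your sketch has concrete gaps. In both places you propose to build or destroy a strictly coarser $S(n)$-topology by modifying the neighborhood filter at $x$, but you never verify that the coarsened topology is still $S(n)$ --- you explicitly flag this as delicate and then do not do it --- and in the sufficiency direction you never exhibit the infinite set $A$ to which the hypothesis is to be applied (``a suitable witnessing set'' is not a construction). The paper avoids both problems by working entirely with the filter characterization of minimal $S(n)$-spaces: a $S(n)$-closed $S(n)$-space is minimal iff every open $S(n)$-filter with a unique adherent point converges. For necessity, the failure of $\theta(n)$-convergence to $X\setminus\{x\}$ yields an $S(n-1)$-cover $\gamma$ of $X\setminus\{x\}$ with $|A\setminus\bigcup_{i=1}^k\overline{U_{\alpha_i}}|=|A|$ for all finite subfamilies; the open $S(n)$-filter generated by $\{X\setminus\overline{U_{\alpha}}\}$ has unique adherent point $x$, hence converges to $x$ by minimality, and each of its members meets $A$ in full cardinality, so $x$ is a complete accumulation point of $A$ --- no new topology is ever constructed. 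For sufficiency, one takes a nonconvergent open $S(n)$-filter $\varphi$ with $ad\,\varphi=\{x\}$, an $O(x)$ with all traces $W_{\alpha}=V_{\alpha}\setminus O(x)$ nonempty, chooses $W_{\alpha_0}$ of minimal cardinality as the concrete witnessing set, shows it does not $\theta(n)$-converge to $X\setminus\{x\}$, and then the hypothesis forces $x$ to be a complete accumulation point of $W_{\alpha_0}$, contradicting $W_{\alpha_0}\cap O(x)=\emptyset$. You should restructure your argument around this filter characterization rather than around ad hoc topology coarsenings whose $S(n)$-property is left unverified.
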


\begin{proof}
Necessary. Let $X$ be minimal $S(n)$-space and $A\subseteq X$.
Then $X$ is an $S(n)$-closed space (Corollary 2.3. in
~{\cite{jrw}}) and $A$ (weakly) $\theta(n)$-converge to the set
$B$ of its $\theta(n)$-complete accumulation points. Let $x\in X$
such that $A$ does not $\theta(n)$-converge to $X\setminus \{x\}$.
Consider $S(n-1)$-cover $\gamma=\{U_{\alpha}\}$ of $X\setminus
\{x\}$ such that $|A\setminus \bigcup_{i=1}^k
\overline{U_{\alpha_i}}|=|A|$ holds for any
$\gamma'=\{U_{\alpha_i}\}_{i=1}^k\subseteq \gamma$.

Let $\omega$ open $S(n)$-filter generated by $\{X\setminus
\overline{U_{\alpha}}: U_{\alpha}\in \gamma \}$. Then $\omega$ has
unique adherent point $x$. Since $X$ is minimal $S(n)$-space, we
have that open $S(n)$-filter $\omega$ converge to $x$. Thus for
every open neighborhood $O(x)$ of $x$ there is $V\in \omega$ such
that $V\subseteq O(x)$. So $|V\bigcap A|=|A|$ we have $x$ is a
complete accumulation point of $A$.

Sufficiency. We only need show that any open $S(n)$-filter
$\varphi$  with  unique adherent point $x$ is convergent.

Suppose that $ad$ $\varphi=\{x\}$, but $\varphi$ does not
converge. Then there is an open neighborhood $O(x)$ of $x$ such
that $W_{\alpha}=V_{\alpha}\setminus O(x)\neq \emptyset$, for any
$V_{\alpha}\in \varphi$. Choose $W_{\alpha_0}$ such that
$|W_{\alpha_0}|=inf\{|W_{\alpha}|: V_{\alpha}\in \varphi \}$. Let
$B$ be the set of $\theta(n)$-complete accumulation points of
$W_{\alpha_0}$. Note that $x\in B$. On a contrary, assume that
$x\notin B$ then for every $y\in B$ there are $n$-hull
neighborhood $O(y)$ of $y$ and $W_{\alpha}$ such that
$\overline{O(y)}\bigcap W_{\alpha}=\emptyset$. Consider
$S(n-1)$-cover $\gamma=\{O(y) : y\in B\}$ of $B$. For every finite
family $\{O(y_i)\}_{i=1}^k\subseteq \gamma$ there is $W_{\alpha}$
such that

$(\bigcup_{i=1}^k \overline{O(y_i)})\bigcap (W_{\alpha}\bigcap
W_{\alpha_0})=\emptyset$. By the choice of $W_{\alpha_0}$, we have
$|W_{\alpha}\bigcap W_{\alpha_0}|=|W_{\alpha_0}|$. Thus
$W_{\alpha_0}$ does not $\theta(n)$-converge to $B$. It follows
that $x\in B$ and $W_{\alpha_0}$ does not $\theta(n)$-converge to
$B\setminus\{x\}$. For each point $y\in X\setminus B$ we take an
$n$-hull $O_{1}(y)$ such that $|\overline{O_{1}(y)}\bigcap
A|<|A|$. Consider $S(n-1)$-cover $\gamma_1=\gamma\bigcup
\{O_{1}(y) : y\in X\setminus B\}$ of $X\setminus \{x\}$. For every
finite family $\{V_i\}_{i=1}^k\subseteq \gamma_1$  there is
$W_{\alpha}$ such that $(\bigcup_{i=1}^k \overline{V_i})\bigcap
(W_{\alpha}\bigcap W_{\alpha_0})=\emptyset$. Thus $W_{\alpha_0}$
does not $\theta(n)$-converge to $X\setminus \{x\}$.
 By the condition, $x$ is a complete
accumulation point of $W_{\alpha_0}$. This contradicts the fact
that $W_{\alpha_0}=V_{\alpha_0}\setminus O(x)$.

\end{proof}

Clearly, that the weakly $\theta(n)$-converge implies
$\theta(n)$-converge. By Theorems \ref{tm3} and \ref{tm4}, we have

\begin{theorem}\label{tm5}
For $n\in \mathbb N$, a $S(n)$-space $X$ is minimal $S(n)$-space
if and only if $X$ is a $S(n)$-closed, and if there exists a point
$x$ such that infinity set $A$ does not $\theta(n)$-converge to
$X\setminus \{x\}$, then $x$ is a complete accumulation point of
$A$.
\end{theorem}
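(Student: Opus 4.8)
Looking at Theorem 5.4 (tm5), the plan is to combine Theorems 5.3 (tm3) and the just-proved minimal $S(n)$ characterization (Theorem 5.2, labeled tm4).

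The plan is to prove both directions by deriving this statement from Theorems \ref{tm3} and \ref{tm4}, together with the observation noted just before the statement that ``weakly $\theta(n)$-converge implies $\theta(n)$-converge.''

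For the forward direction, suppose $X$ is minimal $S(n)$. By Corollary 2.3 of~\cite{jrw} (invoked already in the proof of Theorem~\ref{tm4}), a minimal $S(n)$-space is $S(n)$-closed. The condition on complete accumulation points then comes verbatim from the ``necessary'' part of Theorem~\ref{tm4}: if some infinite $A$ fails to $\theta(n)$-converge to $X\setminus\{x\}$, then $x$ must be a complete accumulation point of $A$.

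For the converse, assume $X$ is $S(n)$-closed and the complete-accumulation-point condition holds. I need to verify the full hypothesis of Theorem~\ref{tm4}: (i) every infinite $A\subseteq X$ $\theta(n)$-converges to the set $B$ of its $\theta(n)$-complete accumulation points, and (ii) the point condition. Here (ii) is assumed. For (i), apply Theorem~\ref{tm3}: since $X$ is $S(n)$-closed, every infinite $A$ \emph{weakly} $\theta(n)$-converges to $B$; and weakly $\theta(n)$-converge implies $\theta(n)$-converge, so $A$ $\theta(n)$-converges to $B$. Thus the hypotheses of Theorem~\ref{tm4} are satisfied, and that theorem yields that $X$ is minimal $S(n)$.

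The only point requiring care---and the main (small) obstacle---is making sure the notion of ``infinite set'' and the word ``infinity set'' match up with the hypotheses of Theorem~\ref{tm4}, and that the implication ``weakly $\theta(n)$-converge $\Rightarrow$ $\theta(n)$-converge'' is indeed immediate from Definitions 4.3 and 5.1: a finite subfamily with $|A\setminus Int(\bigcup \overline{U_{\alpha_i}})|<|A|$ automatically satisfies $|A\setminus \bigcup\overline{U_{\alpha_i}}|<|A|$ since $Int(\bigcup\overline{U_{\alpha_i}})\subseteq \bigcup\overline{U_{\alpha_i}}$, so the complement is smaller. With this established, the theorem follows by chaining Theorems~\ref{tm3} and~\ref{tm4}.
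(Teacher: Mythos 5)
Your proposal is correct and is exactly the paper's route: the paper derives Theorem~\ref{tm5} by the remark that weakly $\theta(n)$-converge implies $\theta(n)$-converge together with Theorems~\ref{tm3} and~\ref{tm4}, which is precisely the chaining you carry out (including the verification that $Int(\bigcup\overline{U_{\alpha_i}})\subseteq\bigcup\overline{U_{\alpha_i}}$ gives the implication between the two convergence notions). Nothing further is needed.
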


In ~{\cite{fgpp}} raised the question (Q40) about the
characterization of $MU$ spaces.
 Namely, find a property $\mathcal Q$ which does not imply $U$-closed for which a space
  is $U$-closed and has property $\mathcal Q$ if and only if it is $MU$.
  The following theorem answers this question.

\begin{theorem}
An Urysohn space $X$ is $MU$ if and only if $X$ is a $U$-closed,
and if there exists a point $x$ such that infinity $A$ does not
$\theta(2)$-converge to $X\setminus \{x\}$, then $x$ is a complete
accumulation point of $A$.
\end{theorem}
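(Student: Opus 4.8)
The plan is to obtain this as a direct corollary of Theorem~\ref{tm5} by specializing to $n=2$. Recall that for $n=2$ an $S(2)$-space is exactly an Urysohn space, and $S(2)$-closedness is by definition $U$-closedness (as noted in Section~2). Thus the statement of Theorem~\ref{tm5} with $n=2$ reads: an Urysohn space $X$ is minimal $S(2)$ if and only if $X$ is $U$-closed, and whenever there exists a point $x$ such that some infinite set $A$ does not $\theta(2)$-converge to $X\setminus\{x\}$, then $x$ is a complete accumulation point of $A$. The only remaining point is that ``minimal $S(2)$-space'' and ``$MU$'' (minimal Urysohn) denote the same class, which is immediate from the definition given at the start of Section~5, since the property $\mathcal{P}$ ``being an $S(2)$-space'' is exactly the property ``being Urysohn.''

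Concretely, I would write: ``By the definitions in Section~2, an Urysohn space is precisely an $S(2)$-space, and a $U$-closed space is precisely an $S(2)$-closed space. Hence $X$ is $MU$ if and only if $X$ is a minimal $S(2)$-space, and the assertion is Theorem~\ref{tm5} for $n=2$.'' That is essentially the whole proof.

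The only thing that requires a moment of care — and the step I would flag as the potential obstacle — is making sure the quantifier structure in the $MU$ characterization matches exactly what Theorem~\ref{tm5} delivers, in particular that ``infinity $A$'' in the present statement is read as ``for every infinite set $A$,'' just as in Theorem~\ref{tm5}. Since the $n=2$ instance of Theorem~\ref{tm5} is a verbatim match (after the terminological translation $S(2)\leftrightarrow$ Urysohn, $S(2)$-closed $\leftrightarrow$ $U$-closed), there is no genuine difficulty, and the theorem indeed answers question (Q40) of~\cite{fgpp}: the property $\mathcal{Q}$ is ``every infinite set that fails to $\theta(2)$-converge to the complement of some point $x$ has $x$ as a complete accumulation point,'' which on its own does not force $U$-closedness (as is the case already for the analogous $\theta(n)$-convergence conditions in Theorems~\ref{tm3} and \ref{tm4}), yet combined with $U$-closedness characterizes $MU$.

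\begin{proof}
By the definitions in Section~2, a space is an Urysohn space if and only if it is an $S(2)$-space, and it is $U$-closed if and only if it is $S(2)$-closed. Consequently, $X$ is $MU$ if and only if $X$ is a minimal $S(2)$-space. Applying Theorem~\ref{tm5} with $n=2$ now yields exactly the stated equivalence.
\end{proof}
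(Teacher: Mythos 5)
Your proposal is correct and matches the paper's intent exactly: the paper states this result immediately after Theorem~\ref{tm5} with no separate proof, treating it as the $n=2$ instance via the identifications $S(2)$-space $=$ Urysohn, $S(2)$-closed $=$ $U$-closed, and minimal $S(2)$ $=$ $MU$, which is precisely your argument.
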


In ~{\cite{fgpp}} raised the question (Q35): Does every $MU$ space
have a base of open sets with $U$-closed complements ?

Note, that the negative answer to this question is the following
example ~{\cite{herr}}. This is an example of a $MU$ space that
has no open base with $U$-closed complements.

\mbox{\bf Example 2.(Herrlich)}\label{ex1}\ \ For any ordinal
number $\alpha$, let $W(\alpha)$ be the set of all ordinals
strictly less than $\alpha$. Let $\omega_0$ be the first infinite
ordinal and $\omega_1$ the first uncountable ordinal. Let
$R=(W(\omega_{1}+1)\times W(\omega_{0}+1))\setminus \{(\omega_1,
\omega_0)\}$ and $R_n=R\times \{n\}$ where $n=0,\pm 1, \pm 2, ...$
Denote the elements of $R_n$ by $(x,y,n)$. Identify
$(\omega_1,y,n)$ with $(\omega_1,y,n+1)$ if $n$ is odd and
$(x,\omega_0,n)$ with $(x,\omega_0,n+1)$ if $n$ is even. Call the
resulting space $T$. To the subspace $E=R_1\bigcup R_2\bigcup R_3$
of $T$ add two points $a$ and $b$, and let $X=E\bigcup \{a,b\}$. A
set $V\subset X$ is open if and only if

(1) $V\bigcap E$ is open in $E$,

(2) $a\in V$ implies there exist $\alpha_0<\omega_0$ such that
$\{(\alpha,\beta,1) : \beta_0<\beta\leq \omega_0,
\alpha_0<\alpha<\omega_1\}\subset V$, and

(3) $b\in V$ implies there exist $\alpha_0<\omega_1$ and
$\beta_0<\omega_0$ such that $\{(\alpha,\beta,3) : \beta_0<\beta<
\omega_0, \alpha_0<\alpha\leq\omega_1\}\subset V$.

Really, for any open $V\ni a$, if $a\in O(a)=\{(\alpha,\beta,1) :
\beta_0<\beta\leq \omega_0, \alpha_0<\alpha<\omega_1\}\subset V$
then $X\setminus O(a)$ is not a $U$-closed. Infinity set
$\{(\alpha,\omega_0,2) :  \alpha_0<\alpha<\omega_1\}$ do not
 weakly $\theta(2)$-converge to the set of its
$\theta(2)$-complete accumulation points.

\section{Characterization $S(n)FC$ spaces}

\begin{definition}
A $S(n)$-space is $S(n)$-functionally compact ($S(n)FC$) if every
continuous function onto a $S(n)$-space is closed.
\end{definition}

A set $C$ will be called complete accumulation set of a set $A$ if
$|U\bigcap A|=|A|$ for any open set $U\supseteq C$.

\begin{theorem}\label{tm6}
For $n\in \mathbb N$, a $S(n)$-space $X$ is $S(n)FC$ if and only
if any infinity set $A\subseteq X$ $\theta(n)$-converge to the set
$B$ of its $\theta(n)$-complete accumulation points, and if there
exists a $\theta^n$-closed set $C$ such that $A$ does not
$\theta(n)$-converge to $X\setminus C$, then $C$ is a complete
accumulation set of $A$.
\end{theorem}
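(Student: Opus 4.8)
The plan is to mimic the structure of the proofs of Theorems~\ref{tm4} and~\ref{tm6}'s predecessors, replacing the role of a single adherent point $x$ by that of a $\theta^n$-closed complete accumulation set $C$, and using the functional-compactness characterization of $S(n)FC$ spaces in terms of open $S(n)$-filters. The key fact I would invoke is that a $S(n)$-space $X$ is $S(n)FC$ if and only if $X$ is $S(n)$-closed and every open $S(n)$-filter $\varphi$ on $X$ with $ad\,\varphi=C$ (a $\theta^n$-closed set) ``converges to $C$'', i.e. for every open $U\supseteq C$ there is $V\in\varphi$ with $V\subseteq U$. This is the analogue, for $S(n)FC$, of the characterization of minimality used in Theorem~\ref{tm4}; I expect it to be available from the cited work of Dikranjan--Giuli or Porter, since $S(n)FC$ is defined via closed maps onto $S(n)$-spaces and closedness of maps translates into such a filter-convergence condition on the adherence set.

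\emph{Necessity.} Suppose $X$ is $S(n)FC$. Then $X$ is $S(n)$-closed (take the constant map, or observe a map to a point is closed iff $X$ is $S(n)$-closed as an $S(n)$-space), so by Theorem~\ref{tm3} any infinite set $A$ weakly $\theta(n)$-converges, hence $\theta(n)$-converges, to the set $B$ of its $\theta(n)$-complete accumulation points. Now let $C$ be a $\theta^n$-closed set such that $A$ does not $\theta(n)$-converge to $X\setminus C$. Fix an $S(n-1)$-cover $\gamma=\{U_\alpha\}$ of $X\setminus C$ witnessing this, so $|A\setminus\bigcup_{i=1}^k\overline{U_{\alpha_i}}|=|A|$ for every finite subfamily. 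Let $\varphi$ be the open $S(n)$-filter generated by $\{X\setminus\overline{U_\alpha}:U_\alpha\in\gamma\}$; as in the proof of Theorem~\ref{tm4}, the elements of $\varphi$ meet $A$ in a set of cardinality $|A|$, and since $\gamma$ covers $X\setminus C$ one checks $ad\,\varphi\subseteq C$, while $\theta^n$-closedness of $C$ gives $ad\,\varphi=C$ (each point of $C$ fails to be $S(n)$-separated from every $X\setminus\overline{U_\alpha}$). By $S(n)FC$, $\varphi$ converges to $C$: for every open $U\supseteq C$ there is $V\in\varphi$ with $V\subseteq U$, whence $|U\cap A|\geq|V\cap A|=|A|$, so $C$ is a complete accumulation set of $A$.

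\emph{Sufficiency.} Conversely, assume the stated condition. By Theorems~\ref{tm3} and the remark preceding Theorem~\ref{tm5}, $X$ is $S(n)$-closed; it remains to show every open $S(n)$-filter $\varphi$ with $\theta^n$-closed adherence $ad\,\varphi=C$ converges to $C$. Suppose not: there is an open $U\supseteq C$ with $W_\alpha:=V_\alpha\setminus U\neq\emptyset$ for all $V_\alpha\in\varphi$. Pick $W_{\alpha_0}$ of minimal cardinality among the $W_\alpha$, and let $B$ be its set of $\theta(n)$-complete accumulation points. Arguing exactly as in the sufficiency half of Theorem~\ref{tm4} --- first showing $C\subseteq B$ (else some point of $C$ would be $S(n)$-separated from some $W_\alpha$, contradicting $ad\,\varphi=C$ via minimality of $|W_{\alpha_0}|$), then building the $S(n-1)$-cover $\gamma_1$ of $X\setminus C$ from $n$-hulls around points of $B\setminus C$ (using $C\subseteq B$ and $\theta^n$-closedness of $C$ so that $X\setminus C$ is a ``safe'' region) together with $n$-hulls around points of $X\setminus B$ that meet $W_{\alpha_0}$ in few points --- we conclude $W_{\alpha_0}$ does not $\theta(n)$-converge to $X\setminus C$. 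The hypothesis then forces $C$ to be a complete accumulation set of $W_{\alpha_0}$, so $|U\cap W_{\alpha_0}|=|W_{\alpha_0}|$; but $W_{\alpha_0}=V_{\alpha_0}\setminus U$ is disjoint from $U$, a contradiction. Hence $\varphi$ converges to $C$ and $X$ is $S(n)FC$.

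\emph{Main obstacle.} The delicate point is the internal filter-theoretic characterization of $S(n)FC$ by convergence to $\theta^n$-closed adherence sets --- establishing (or locating in the literature) that ``every continuous onto map to an $S(n)$-space is closed'' is equivalent to this condition on open $S(n)$-filters. Once that bridge is in place, both directions run parallel to the minimal-$S(n)$ argument of Theorem~\ref{tm4}, with a single point $x$ systematically replaced by the $\theta^n$-closed set $C$; the bookkeeping with cardinalities of $W_\alpha\cap W_{\alpha_0}$ and the construction of the refining $S(n-1)$-cover are then routine adaptations.
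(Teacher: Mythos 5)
There is a genuine gap, and you have in fact named it yourself: the ``filter-theoretic characterization of $S(n)FC$'' that your whole argument rests on is not a known lemma you can cite --- it is essentially the content of the theorem, and the paper proves it inline rather than invoking it. Concretely, in the necessity direction the paper does not assume that an open $S(n)$-filter with adherence $C$ must converge to $C$; instead, given an open $W\supseteq C$ with $|A\cap W|<|A|$, it \emph{constructs} a continuous non-closed map: form the quotient $X/C$ (collapsing $C$ to a point $c$), retopologize it with $\tau_1=\{V\in\tau: c\in V \Rightarrow V\in\omega\}$ where $\omega$ is the filter generated by $\{X\setminus\overline{U_\alpha}\}$, check that $(X/C,\tau_1)$ is still an $S(n)$-space, and observe that $q_C(X\setminus W)$ is not closed there. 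Without this (or an equivalent) construction your necessity argument has no way to get from ``$\varphi$ does not converge to $C$'' to ``some continuous onto map to an $S(n)$-space is not closed.'' (Your side claim that $ad\,\varphi=C$ is also shaky: the $U_\alpha$ only cover $X\setminus C$ and their closures may meet $C$ or miss parts of it; the paper never needs this identity.)

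In the sufficiency direction the same gap reappears: you ``reduce'' to showing that every open $S(n)$-filter with $\theta^n$-closed adherence converges to its adherence, but the hypothesis you are given is the accumulation-set condition, and the conclusion you need is closedness of every continuous onto map $f:X\to Y$. The paper bridges this directly: take $A$ closed with $f(A)$ not closed, $y\in\overline{f(A)}\setminus f(A)$, and work with the preimage filter $\{f^{-1}(V_\alpha)\}$ of the neighborhood filter at $y$; the $\theta^n$-closed set is $B=f^{-1}(y)$, the sets $W_\alpha=f^{-1}(V_\alpha)\cap A$ are nonempty because $y\in\overline{f(A)}$, and the final contradiction is that $X\setminus A$ is an open set containing $B$ that misses $W_{\alpha_0}$ entirely. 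Your cover construction (around points of $B\setminus C$ and $X\setminus B$) does not match this and would need the unproven bridge to get started. The analogue you are hoping for (the Dickman--Zame convergence characterization of functionally compact spaces) is known only for $n=1$ and is not among the cited results; proving it for general $n$ is exactly the work the theorem's proof has to do.
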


\begin{proof}
Necessary. Let $X$ be $S(n)FC$ and $A\subseteq X$. Then $X$ is an
$S(n)$-closed space and $A$ (weakly) $\theta(n)$-converge to the
set $B$ of its $\theta(n)$-complete accumulation points. Let
$\theta^n$-closed set $C$ such that $A$ does not
$\theta(n)$-converge to $X\setminus C$.

Consider $S(n-1)$-cover $\gamma=\{U_{\alpha}\}$ of $X\setminus C$
such that $|A\setminus \bigcup_{i=1}^k
\overline{U_{\alpha_i}}|=|A|$ holds for any
$\gamma'=\{U_{\alpha_i}\}_{i=1}^k\subseteq \gamma$.

Let $\omega$ open $S(n)$-filter generated by $\{X\setminus
\overline{U_{\alpha}}: U_{\alpha}\in \gamma \}$.

  Suppose that there exists an open set $W\supseteq C$ such that $|A \bigcap
  W|<|A|$. Consider the quotient space $(X/C,\tau)$ of $X$ with $C$
  identified to a point $c$. Now $\tau_1=\{ V\in \tau : c\in V$ implies $V\in
  \omega \}$ is a topology on $X/C$. In $(X/C, \tau_1)$ we have
  $ad_{\theta^n}N_{x}$ for any $x$ where $N_{x}$ is the neighbourhood filter at the point $x$,
   and thus $(X/C, \tau_1)$ is an $S(n)$-space. It is clear that $\tau_1$ is strictly coarser
  than $\tau$. The quotient function from $X$ to $X/C$ is denoted
  as $p_{C}$, and $q_{C}$ denotes $s\circ p_{C}$ where $s: (X/C,
  \tau) \rightarrow (X/C, \tau_1)$ is the identity function.
Note that $q_{C}(X\setminus W)$ is not closed in $(X/C, \tau_1)$.
Thus, $q_{C}$ is continuous but is not closed. This is a
contradiction that $X$ is a $S(n)FC$ space.

Sufficiency. Suppose that $X$ is not $S(n)FC$ space. Then there is
a continuous function $f$ from $X$ onto an $S(n)$-space $Y$ such
that $f$ is not closed. Consider the closed set $A\subseteq X$
such that $f(A)$ is not closed. Let $y\in \overline{f(A)}\setminus
f(A)$ and $N_{y}=\{V_{\alpha}\}$ is the neighbourhood
$S(n)$-filter at the point $y$. Then $B=f^{-1}(y)=\bigcap
\{f^{-1}(V_{\alpha})\}$ and $B$ is a $\theta^n$-closed subset of
$X$. Note that $X\setminus A$ is an open set containing $B$ such
that $W_{\alpha}=U_{\alpha}\setminus (X\setminus A)\neq \emptyset$
for any $U_{\alpha}\in \{f^{-1}(V_{\alpha})\}$.

 Choose $W_{\alpha_0}$ such that
$|W_{\alpha_0}|=\inf\limits_{\alpha} \{|W_{\alpha}|\}$.

Let $D$ be the set of $\theta(n)$-complete accumulation points of
$W_{\alpha_0}$. By the condition, set $W_{\alpha_0}$
$\theta(n)$-converge to the set $D$. We claim that
$\theta^n$-closed set $B$ such that $W_{\alpha_0}$ does not
$\theta(n)$-converge to $X\setminus B$. Indeed, for any $x\in
X\setminus B$ there is $U_{\alpha_{x}}$ such that $x$ is
$S(n)$-separated from $U_{\alpha_{x}}$. Let $O(x)$ be a $n$-hull
neighbourhood of $x$ such that $\overline{O(x)}\bigcap
U_{\alpha_{x}}=\emptyset$. Consider a $S(n-1)$-cover
$\gamma=\{O(x) : x\in X\setminus B \}$ of $X\setminus B$. For any
finite family $\{ O(x_i)\}_{i=1}^{k} \subseteq \gamma$ there is
$U=\bigcap\limits_{i=1}^{k} U_{\alpha_{x_i}}$ such that
$\bigcup\limits_{i=1}^{k} \overline{O(x_i)}\bigcap (U\bigcap
W_{\alpha_0})=\emptyset$. By the choice of $W_{\alpha_0}$, we have
$|U \bigcap W_{\alpha_0}|=|W_{\alpha_0}|$. It follows that
$W_{\alpha_0}$ does not $\theta(n)$-converge to $X\setminus B$. By
the condition, $B$ is a complete accumulation set of
$W_{\alpha_0}$. This contradicts the fact that $X\setminus A$ is
an open set containing $B$.

\end{proof}

\begin{corollary}
\label{tm7} An Urysohn space $X$ is $UFC$ if and only if $X$ is an
$U$-closed, and if there exists a $\theta^2$-closed set $C$ such
that infinity set $A$ does not $\theta(2)$-converge to $X\setminus
C$, then $C$ is a complete accumulation set of $A$.
\end{corollary}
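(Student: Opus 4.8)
The plan is to derive this corollary as the special case $n=2$ of Theorem~\ref{tm6}, once the standard dictionary between $S(2)$-terminology and classical Urysohn terminology is in place. First I would record the translations: for $n=2$ an $S(2)$-space is exactly an Urysohn space, an $S(2)$-closed space is a $U$-closed space, and an $S(2)FC$ space is a $UFC$ space; moreover the notions ``$\theta^2$-closed set'', ``$\theta(2)$-convergence'', ``$\theta(2)$-complete accumulation point'' and ``complete accumulation set'' occurring in the corollary are precisely the $n=2$ instances of the corresponding notions in Theorem~\ref{tm6}. With this translation, Theorem~\ref{tm6} at $n=2$ says: an Urysohn space $X$ is $UFC$ if and only if (a) every infinite set $A\subseteq X$ $\theta(2)$-converges to the set $B$ of its $\theta(2)$-complete accumulation points, and (b) whenever there exists a $\theta^2$-closed set $C$ such that $A$ does not $\theta(2)$-converge to $X\setminus C$, then $C$ is a complete accumulation set of $A$.

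The only thing left is to see that, in the presence of $U$-closedness, clause (a) is automatic, so that ``(a) and (b)'' may be replaced by ``$U$-closed and (b)'', which is what the corollary asserts. For the forward direction, if $X$ is $UFC$ then, by the necessity part of Theorem~\ref{tm6}, $X$ is $U$-closed and (b) holds, giving one implication directly. For the converse, assume $X$ is $U$-closed and (b) holds. Applying Theorem~\ref{tm3} with $n=2$, every infinite $A\subseteq X$ weakly $\theta(2)$-converges to the set $B$ of its $\theta(2)$-complete accumulation points; since weakly $\theta(n)$-converge implies $\theta(n)$-converge (as noted just before Theorem~\ref{tm5}), clause (a) holds as well. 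Thus both (a) and (b) hold, and the sufficiency part of Theorem~\ref{tm6} yields that $X$ is $UFC$. Combining the two directions gives the corollary.

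I do not expect a genuine obstacle here, since everything reduces to Theorem~\ref{tm6}; the only point demanding care is the bookkeeping of the ``if and only if''. Concretely, one must check that clause (a) of Theorem~\ref{tm6} is subsumed by $U$-closedness — via Theorem~\ref{tm3} together with the implication that weakly $\theta(2)$-convergence forces $\theta(2)$-convergence — and, symmetrically, that $UFC$ itself already forces $U$-closedness, so that replacing clause (a) by the hypothesis ``$X$ is $U$-closed'' is legitimate on both sides of the equivalence. Once that alignment is verified, no further argument is needed.
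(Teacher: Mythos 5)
Your proposal is correct and follows exactly the route the paper intends: the corollary is the $n=2$ specialization of Theorem~\ref{tm6}, with clause (a) traded for $U$-closedness via Theorem~\ref{tm3} and the remark (made just before Theorem~\ref{tm5}) that weak $\theta(n)$-convergence implies $\theta(n)$-convergence, mirroring how Theorem~\ref{tm5} is derived from Theorems~\ref{tm3} and~\ref{tm4}. Your explicit check that $UFC$ already forces $U$-closedness (from the necessity part of Theorem~\ref{tm6}) is the right piece of bookkeeping to make the substitution legitimate in both directions.
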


\begin{definition}
A $S(n)$-space $X$ is $S(n)FFC$ ($S(n)CFC$) if every continuous
function $f$ onto a $S(n)$-space $Y$ with $f^{-1}(y)$ finite
(compact) is a closed function.
\end{definition}

\begin{theorem}\label{tm26}
For $n\in \mathbb N$, a $S(n)$-space $X$ is $S(n)FFC$ ($S(n)CFC$)
if and only if $X$ is a $S(n)$-closed, and if there exists a
finite (compact) set $C$ such that infinity set $A$ does not
$\theta(n)$-converge to $B\setminus C$, then $C$ is a complete
accumulation set of $A$.
\end{theorem}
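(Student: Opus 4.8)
The plan is to transcribe the proof of Theorem~\ref{tm6} almost verbatim, replacing the phrase ``$\theta^n$-closed set $C$'' by ``finite (resp.\ compact) set $C$'' throughout, and to watch the one place where the substitution matters: the two auxiliary maps produced (one in each direction) must have finite (resp.\ compact) fibres, so that they are exactly the maps which $S(n)FFC$ (resp.\ $S(n)CFC$) requires to be closed. The only preliminary I need is that \emph{in an $S(n)$-space every finite set, and every compact set, is $\theta^n$-closed}. For a singleton this is the $S(n)$-separation axiom; a finite union of $\theta^n$-closed sets is $\theta^n$-closed because the termwise intersection of finitely many defining towers $U_1\subseteq\cdots\subseteq U_n$ of $n$-hulls is again such a tower (using $\overline{U\cap V}\subseteq\overline U\cap\overline V$ and $\overline{U_i}\subseteq U_{i+1}$); and for a compact $K$ with $x\notin K$, one chooses for each $k\in K$ an $n$-hull $U^{k}$ of $x$ with $k\notin\overline{U^{k}_{n}}$, extracts a finite subcover of $K$ from $\{X\setminus\overline{U^{k}_{n}}\}$, and again intersects the corresponding towers termwise to get an $n$-hull of $x$ whose outermost closure misses $K$. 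In particular a finite (resp.\ compact) $C$ automatically satisfies the hypothesis ``$C$ is $\theta^n$-closed'' of Theorem~\ref{tm6}, so the quotient space $(X/C,\tau_1)$ constructed there is still an $S(n)$-space.

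For necessity I first note that $S(n)FFC$, resp.\ $S(n)CFC$, forces $X$ to be $S(n)$-closed: were $X$ not $S(n)$-closed it would also fail to be minimal $S(n)$ (minimal $S(n)$ spaces are $S(n)$-closed, Corollary~2.3 of~\cite{jrw}), hence it would admit a strictly coarser $S(n)$-topology $\tau'$; then $\mathrm{id}\colon X\to(X,\tau')$ is a continuous bijection onto an $S(n)$-space with singleton --- hence finite and compact --- fibres which is not closed (a continuous closed bijection is a homeomorphism), contradicting $S(n)FFC$, resp.\ $S(n)CFC$. Granting this, Theorem~\ref{tm3} (and the remark that weak $\theta(n)$-convergence implies $\theta(n)$-convergence) yields the first clause: every infinite $A\subseteq X$ $\theta(n)$-converges to the set of its $\theta(n)$-complete accumulation points. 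For the second clause I copy the necessity half of Theorem~\ref{tm6}: given a finite (resp.\ compact) $C$ with $A$ not $\theta(n)$-converging to $X\setminus C$, I build the $S(n-1)$-cover $\gamma$ of $X\setminus C$, the open $S(n)$-filter $\omega$ it generates, and --- supposing for contradiction an open $W\supseteq C$ with $|A\cap W|<|A|$ --- the quotient $(X/C,\tau_1)$ together with the continuous, non-closed surjection $q_{C}\colon X\to(X/C,\tau_1)$. The new ingredient is that $q_{C}^{-1}(c)=C$ is finite (resp.\ compact) while every other fibre is a singleton, so $q_{C}$ is one of the maps that $S(n)FFC$, resp.\ $S(n)CFC$, demands be closed; this contradiction forces $C$ to be a complete accumulation set of $A$.

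For sufficiency I suppose $X$ is not $S(n)FFC$, resp.\ not $S(n)CFC$, so some continuous surjection $f\colon X\to Y$ onto an $S(n)$-space, all of whose fibres are finite (resp.\ compact), fails to be closed; I then follow the sufficiency half of Theorem~\ref{tm6}. Pick a closed $A\subseteq X$ with $f(A)$ not closed, a point $y\in\overline{f(A)}\setminus f(A)$, the neighbourhood $S(n)$-filter $N_{y}=\{V_{\alpha}\}$, and set $B=f^{-1}(y)=\bigcap f^{-1}(V_{\alpha})$, which is finite (resp.\ compact) by the hypothesis on $f$ and $\theta^n$-closed by the preliminary observation. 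Put $W_{\alpha}=f^{-1}(V_{\alpha})\cap A\neq\emptyset$; since $\{W_{\alpha}\}$ is a filter base with $\bigcap W_{\alpha}=B\cap A=\emptyset$, any $W_{\alpha_{0}}$ of minimum cardinality must be infinite (a finite one would lie in every $W_{\alpha}$, hence be empty). As in Theorem~\ref{tm6}, the minimality of $W_{\alpha_{0}}$ together with an $S(n-1)$-cover of $X\setminus B$ assembled from $n$-hulls $S(n)$-separating points of $X\setminus B$ from suitable fibres $f^{-1}(V_{\alpha_{x}})$ shows that $W_{\alpha_{0}}$ does not $\theta(n)$-converge to $X\setminus B$. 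Applying the second clause of the hypothesis with the finite (resp.\ compact) set $C:=B$, I get that $B$ is a complete accumulation set of $W_{\alpha_{0}}$; but $X\setminus A$ is an open set containing $B$ and disjoint from $W_{\alpha_{0}}$, so $|(X\setminus A)\cap W_{\alpha_{0}}|=0<|W_{\alpha_{0}}|$, the desired contradiction.

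The genuinely new work is confined to the preliminary lemma, and there the compact case is the only mildly delicate point: one must confirm that the termwise intersection of finitely many towers of $n$-hulls is still a tower and that its outermost closure avoids $K$, and then, on the sufficiency side, that a compact fibre $f^{-1}(y)$ really is admissible as the set ``$C$'' in the hypothesis. Everything else is an unchanged transcription of Theorem~\ref{tm6}; the substantive difference is only the bookkeeping that the identity map and the maps $q_{C}$ above have finite (resp.\ compact) fibres, which is precisely what brings $S(n)FFC$ (resp.\ $S(n)CFC$) to bear.
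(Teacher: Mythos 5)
Your proposal is correct and follows exactly the route the paper intends: the paper's entire proof of this theorem is the single sentence that it is analogous to Theorem~\ref{tm6}, and you carry out that analogy faithfully, adding the (needed but unstated) observation that finite and compact subsets of an $S(n)$-space are $\theta^n$-closed and tracking that the maps $q_C$ and $f$ have finite (resp.\ compact) fibres. Your reading of the statement's ``$B\setminus C$'' as ``$X\setminus C$'' (matching Theorem~\ref{tm6}) is the sensible one.
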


\begin{proof} A proof of Theorem~\ref{tm26} is analogous to that of Theorem
~\ref{tm6}.
\end{proof}

{\bf Question 2.} Is every $S(n)FC$ ( $S(n)FFC$, $S(n)CFC$ ) space
necessarily compact  $(n>1)$?

\section{$S(\omega)$-closed and minimal $S(\omega)$ spaces}

 Two filters $\mathcal F$ and $\mathcal Q$ on a space $X$ are $S(\omega)$-separated
 if there are open families $\{ U_{\beta}: \beta<\omega\}\subseteq
 \mathcal F\}$ and $\{ V_{\beta}: \beta<\omega\}\subseteq
 \mathcal Q\}$ such that $U_{0}\bigcap V_{0}=\emptyset$ and for
 $\gamma+1<\omega$, $clU_{\gamma+1}\subseteq U_{\gamma}$ and
 $clV_{\gamma+1}\subseteq V_{\gamma}$. A space $X$ is $S(\omega)$
 if for distinct points $x,y\in X$, the neighborhood filters
 $\mathcal N_{x}$ and $\mathcal N_{y}$ are $S(\omega)$-separated.

A $S(\omega)$-closed space is a $S(\omega)$ space closed in any
$S(\omega)$ space containing them.

In 1973, Porter and Votaw ~{\cite{pv}} established next results.

(1) A minimal $S(\omega)$ space is $S(\omega)$-closed and
semiregular.

(2) A minimal $S(\omega)$ space is regular.

(3) A space is $R$-closed if and only if it is $S(\omega)$-closed
and regular.

(4) A space is $MR$ if and only if it is minimal $S(\omega)$.

\begin{definition} A neighborhood $U$ of a point $x$ is called an $\omega$-hull of
the point $x$ if there exists a set of neighborhoods
$\{U_{i}\}_{i=1}^{\infty}$ of the point $x$ such that
$clU_{i}\subseteq U_{i+1}$ and $U_{i}\subseteq U$ for every $i\in
\mathbb N$.
\end{definition}

\begin{definition} A point $x$ from $X$ is called a
$\theta(\omega)$-accumulation point of an infinite set $F$ if
$|F\bigcap U|=|F|$ for any $U$, where $U$ is an $\omega$-hull of
the point $x$.
\end{definition}

\begin{definition}
The set $A$ is $\theta(\omega)$-converge to the set $B$ if for any
regular cover $\gamma=\{U_{\alpha}\}$ of $B$ there exists a finite
family $\{ U_{\alpha_i}\}_{i=1}^s \subseteq \gamma$ such that
$|A\setminus \bigcup_{i=1}^s U_{\alpha_i}|<|A|$.
\end{definition}

\begin{theorem}\label{tm10}
A regular space $X$ is $R$-closed if and only if any infinity set
$A\subseteq X$ $\theta(\omega)$-converge to the set $B$ of its
$\theta(\omega)$-accumulation points.
\end{theorem}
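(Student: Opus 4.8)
The plan is to imitate the proof of Theorem~\ref{tm3} step for step, with the dictionary: $S(n)$-closed becomes $R$-closed, $S(n-1)$-cover becomes regular cover, $n$-hull becomes $\omega$-hull, $\theta(n)$-complete accumulation point becomes $\theta(\omega)$-accumulation point, and open $S(n)$-filter becomes regular filter base; the Porter--Votaw characterization used there is replaced by the Berri--Sorgenfrey characterization recalled above ($X$ regular is $R$-closed iff every regular filter base is fixed iff every regular cover of $X$ has a finite subcover).

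For the forward implication I would argue as follows. Assume $X$ is $R$-closed, let $A\subseteq X$ be infinite, $B$ the set of its $\theta(\omega)$-accumulation points, and let $\gamma$ be a regular cover of $B$, with $\gamma'$ a shrinkable open refinement of $\gamma$ covering $B$. For each $x\in X\setminus B$, since $x$ is not a $\theta(\omega)$-accumulation point of $A$, choose an $\omega$-hull $O(x)$ of $x$ with $|A\cap O(x)|<|A|$, and fix the chain $O_1(x)\subseteq\overline{O_1(x)}\subseteq O_2(x)\subseteq\dots\subseteq O(x)$ witnessing that $O(x)$ is an $\omega$-hull, with $x\in O_1(x)$. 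Then $\gamma^{*}=\gamma\cup\{O(x):x\in X\setminus B\}$ is a cover of $X$ and $\gamma'\cup\{O_1(x):x\in X\setminus B\}$ is a shrinkable open refinement of it (for $O_1(x)$ one has $\overline{O_1(x)}\subseteq O_2(x)\subseteq O(x)\in\gamma^{*}$), so $\gamma^{*}$ is a regular cover of $X$. By Berri--Sorgenfrey it has a finite subcover $X=\bigcup_{i=1}^{s}U_i\cup\bigcup_{j=1}^{k}O(x_j)$ with $U_i\in\gamma$; then $A\setminus\bigcup_{i=1}^{s}U_i\subseteq\bigcup_{j=1}^{k}O(x_j)$, a finite union of sets meeting $A$ in fewer than $|A|$ points, so $|A\setminus\bigcup_{i=1}^{s}U_i|<|A|$, i.e.\ $A$ $\theta(\omega)$-converges to $B$.

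For the converse, assume the stated condition and, by Berri--Sorgenfrey, it is enough to show every regular filter base $\varphi=\{V_\alpha\}$ is fixed. Suppose not; then $\bigcap_\alpha\overline{V_\alpha}=\emptyset$ (for a regular filter base $\bigcap_\alpha\overline{V_\alpha}=\bigcap_\alpha V_\alpha$). Pick $V_0\in\varphi$ with $|V_0|=\inf\{|V_\alpha|:V_\alpha\in\varphi\}$. Then $\xi=\{X\setminus\overline{V_\alpha}:V_\alpha\in\varphi\}$ covers $X$ and is its own shrinkable refinement: given $\alpha$, pick $\beta$ with $\overline{V_\beta}\subseteq V_\alpha$, so $\overline{X\setminus\overline{V_\alpha}}\subseteq X\setminus V_\alpha\subseteq X\setminus\overline{V_\beta}\in\xi$. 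Hence $\xi$ is a regular cover of $X$, so of the set $B$ of $\theta(\omega)$-accumulation points of $V_0$. Applying the hypothesis with $A=V_0$ produces a finite $\{X\setminus\overline{V_{\alpha_i}}\}_{i=1}^{s}\subseteq\xi$ with $|V_0\cap\bigcap_{i=1}^{s}\overline{V_{\alpha_i}}|<|V_0|$. Taking $V'\in\varphi$ with $V'\subseteq\bigcap_{i=1}^{s}V_{\alpha_i}$ gives $V'\cap V_0\subseteq V_0\cap\bigcap_{i=1}^{s}\overline{V_{\alpha_i}}$, so a member of $\varphi$ lying inside $V'\cap V_0$ has cardinality $<|V_0|$, contradicting the choice of $V_0$. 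Therefore $\varphi$ is fixed and $X$ is $R$-closed.

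The cardinal-arithmetic steps are routine; the delicate point is the bookkeeping with regular covers --- checking that the auxiliary families $\gamma^{*}$ and $\xi$ genuinely admit shrinkable open refinements. This is precisely where the internal chain built into the definition of an $\omega$-hull, respectively the defining property of a regular filter base, is needed, and it is also where one must settle the reading of ``regular cover of a subset $B$'' (an open family covering $B$ that has a shrinkable open refinement covering $B$) so that restricting a regular cover of $X$ to $B$ is legitimate.
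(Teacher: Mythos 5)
Your proof is correct and is exactly the translation the paper intends: the paper's own ``proof'' of Theorem~\ref{tm10} is only the remark that the argument is analogous to that of Theorem~\ref{tm3}, and your dictionary (Berri--Sorgenfrey in place of Porter--Votaw, $\omega$-hulls and regular covers in place of $n$-hulls and $S(n-1)$-covers) carries that argument over faithfully. The one piece of bookkeeping to tighten is in the forward direction: to certify $\gamma^{*}$ as a \emph{regular} cover you need a refinement that is a shrinkable refinement of \emph{itself}, so take $\gamma'\cup\{O_i(x): i\in\mathbb N,\ x\in X\setminus B\}$ (with $\gamma'$ a regular refinement of $\gamma$) rather than $\gamma'\cup\{O_1(x)\}$ --- the chain $\overline{O_i(x)}\subseteq O_{i+1}(x)$ built into the $\omega$-hull makes this immediate.
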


\begin{proof}A proof of Theorem~\ref{tm10} is analogous to that of Theorem
~\ref{tm3}.
\end{proof}

In ~{\cite{fgpp}} raised the question (Q39) about the
characterization of $MR$ spaces.
 Namely, find a property $\mathcal P$ which does not imply $R$-closed for which a space
  is $R$-closed and has property $\mathcal P$ if and only if it is $MR$.
  The following theorem answers this question.

\begin{theorem}\label{tm11}
A regular space $X$ is $MR$ space if and only if $X$ is a
$R$-closed, and if there exists a point $x\in B$ such that
infinite set $A$ does not $\theta(\omega)$-converge to $X\setminus
\{x\}$, then $x$ is a complete accumulation point of $A$.
\end{theorem}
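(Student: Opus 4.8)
The plan is to mirror the structure of the proof of Theorem~\ref{tm4}, replacing the open $S(n)$-filter machinery by the regular filter base machinery available in the $S(\omega)$/regular setting, and using the characterizations of $R$-closed spaces (regular filter bases are fixed; regular covers have finite subcovers) together with the Porter--Votaw facts that a space is $MR$ iff it is minimal $S(\omega)$ and that minimal $S(\omega)$ spaces are regular and $S(\omega)$-closed. For necessity, suppose $X$ is $MR$. Then $X$ is $R$-closed, hence by Theorem~\ref{tm10} any infinite $A\subseteq X$ $\theta(\omega)$-converges to the set $B$ of its $\theta(\omega)$-accumulation points. Now take $x\in B$ such that $A$ does not $\theta(\omega)$-converge to $X\setminus\{x\}$; fix a regular cover $\gamma=\{U_\alpha\}$ of $X\setminus\{x\}$ witnessing this, so that $|A\setminus\bigcup_{i=1}^s \overline{U_{\alpha_i}}|=|A|$ for every finite subfamily. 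The family $\{X\setminus\overline{U_\alpha}:U_\alpha\in\gamma\}$ generates a regular filter base $\omega$ (regularity of $X$ lets us interpolate closures), and its only adherent point is $x$. Since $X$ is minimal $S(\omega)$ (equivalently $MR$), this regular open filter base must converge to $x$: otherwise one could coarsen the topology at $x$ by declaring as neighborhoods of $x$ only those open sets belonging to $\omega$, producing a strictly coarser regular (indeed $S(\omega)$) topology, contradicting minimality. Convergence of $\omega$ to $x$ means every open $O(x)$ contains some $V\in\omega$; since $V$ meets $A$ in a set of size $|A|$ (because $A$ does not $\theta(\omega)$-converge to $X\setminus\{x\}$ forces the complementary pieces to stay large), we get $|O(x)\cap A|=|A|$ for all $O(x)$, i.e.\ $x$ is a complete accumulation point of $A$.

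For sufficiency, assume $X$ is $R$-closed and satisfies the stated accumulation-point condition; I must show $X$ has no strictly coarser regular topology, equivalently (by the Porter--Votaw equivalence $MR\Leftrightarrow$ minimal $S(\omega)$) that every regular open filter base on $X$ with a unique adherent point converges. So let $\varphi=\{V_\alpha\}$ be a regular open filter base with $\mathrm{ad}\,\varphi=\{x\}$, and suppose it does not converge: there is an open $O(x)$ with $W_\alpha:=V_\alpha\setminus O(x)\neq\emptyset$ for all $\alpha$. Choose $W_{\alpha_0}$ with $|W_{\alpha_0}|=\inf_\alpha|W_\alpha|$, and let $B$ be the set of $\theta(\omega)$-accumulation points of $W_{\alpha_0}$. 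First argue $x\in B$: if not, each $y\in B$ has an $\omega$-hull $O(y)$ with $\overline{O(y)}\cap W_\alpha=\emptyset$ for some $\alpha$ (using that $y$ is not adherent to $\varphi$), and the regular cover $\{O(y):y\in B\}$ of $B$ has, for each finite subfamily, a $W_\alpha$ avoiding it inside $W_{\alpha_0}$; by the choice of $W_{\alpha_0}$ one gets $|W_\alpha\cap W_{\alpha_0}|=|W_{\alpha_0}|$, so $W_{\alpha_0}$ does not $\theta(\omega)$-converge to $B$, contradicting Theorem~\ref{tm10} applied to $A=W_{\alpha_0}$. Hence $x\in B$, and the same kind of argument, now extending the cover over $X\setminus B$ using $\omega$-hulls $O_1(y)$ with $|\overline{O_1(y)}\cap W_{\alpha_0}|<|W_{\alpha_0}|$, shows $W_{\alpha_0}$ does not $\theta(\omega)$-converge to $X\setminus\{x\}$. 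By hypothesis $x$ is then a complete accumulation point of $W_{\alpha_0}$, so $|O(x)\cap W_{\alpha_0}|=|W_{\alpha_0}|$; but $W_{\alpha_0}=V_{\alpha_0}\setminus O(x)$ is disjoint from $O(x)$, a contradiction. Therefore $\varphi$ converges, $X$ is minimal $S(\omega)$, hence $MR$.

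The main obstacle I expect is the careful handling of the passage from a regular cover of $X\setminus\{x\}$ (resp.\ of $X\setminus B$) to the associated regular open filter base and back, since in the $S(\omega)$/regular world one must verify that the relevant families really are regular filter bases / regular covers (i.e.\ that one can always interpolate a closure between successive members), and that the coarsened topology at $x$ is still a genuine $S(\omega)$ topology — this is exactly where regularity of $X$ is used and where one must invoke the Porter--Votaw results (4) and (2) rather than reprove them. A secondary subtlety is cardinality bookkeeping: one must check that the finitely many ``small'' pieces $\overline{U_{\alpha_i}}\cap A$ cannot cover a set of regular power $|A|$, which is the same counting argument as in Theorems~\ref{tm1} and \ref{tm3}; I would simply cite that pattern rather than repeat it. Once these points are in place, the proof is a routine transcription of the proof of Theorem~\ref{tm4} with ``$S(n-1)$-cover'' replaced by ``regular cover'', ``$n$-hull'' by ``$\omega$-hull'', ``open $S(n)$-filter'' by ``regular open filter base'', and ``$S(n)$-closed'' by ``$R$-closed''.
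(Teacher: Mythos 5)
Your proposal follows exactly the route the paper takes: the paper's proof of Theorem~\ref{tm11} consists of the single remark that it is analogous to that of Theorem~\ref{tm4}, and your argument is precisely that analogy carried out, with $S(n-1)$-covers, $n$-hulls and open $S(n)$-filters replaced by regular covers, $\omega$-hulls and regular filter bases, and with the Berri--Sorgenfrey and Porter--Votaw characterizations invoked where the $S(n)$-closedness characterizations were used before. The points you flag as delicate (interpolating closures so that the relevant families really are regular filter bases and regular covers, and establishing $x\in B$ before applying the hypothesis) are exactly where the transcription needs care, and your treatment of them is consistent with the paper's intent.
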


\begin{proof} A proof of Theorem~\ref{tm11} is analogous to that of Theorem
~\ref{tm4}.
\end{proof}

We introduce an operator of $\theta^{\omega}$-closure; for
$M\subseteq X$ and $x\in X$ $x\notin cl_{\theta^{\omega}} M$ if
there is a $\omega$-hull $U$ of $x$ such that $U\bigcap
M=\emptyset$. A set $M\subseteq X$ is $\theta^{\omega}$-closed if
$M=cl_{\theta^{\omega}} M$.

\begin{definition}
A regular space is  regular functionally compact ($RFC$) if every
continuous function onto a  regular space is closed.
\end{definition}

\begin{theorem}\label{tm16}
A regular space $X$ is $RFC$ if and only if $X$ is $R$-closed, and
if there exists a $\theta^{\omega}$-closed set $C$ such that
infinite set $A$ does not $\theta(\omega)$-converge to $X\setminus
C$, then $C$ is a complete accumulation set of $A$.
\end{theorem}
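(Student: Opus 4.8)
The plan is to mimic the structure of the proof of Theorem~\ref{tm6}, with the role of $S(n)$-spaces taken by regular spaces, the role of $\theta^n$-closed sets by $\theta^\omega$-closed sets, $n$-hulls by $\omega$-hulls, $S(n-1)$-covers by regular covers, and $S(n)$-closedness by $R$-closedness (invoking Theorem~\ref{tm10} and the Berri--Sorgenfrey characterization of $R$-closed spaces). Throughout I will use that a regular space is $R$-closed iff every regular filter base is fixed iff every regular cover has a finite subcover, and that in a regular space every filter is an $S(n)$-filter and every cover is regular-refinable, so the passage from open covers to regular covers is painless.

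For \emph{necessity}, assume $X$ is $RFC$. Then in particular every continuous map onto a regular space is closed, so $X$ is $R$-closed; by Theorem~\ref{tm10}, every infinite $A \subseteq X$ $\theta(\omega)$-converges to the set $B$ of its $\theta(\omega)$-accumulation points. Now suppose $C$ is a $\theta^\omega$-closed set with $A$ not $\theta(\omega)$-converging to $X \setminus C$; fix a regular cover $\gamma = \{U_\alpha\}$ of $X \setminus C$ witnessing this, and let $\omega$ be the regular filter base generated by $\{X \setminus \overline{U_\alpha} : U_\alpha \in \gamma\}$ (here I use that $C$ being $\theta^\omega$-closed forces $C = \bigcap_\alpha (X\setminus \overline{U_\alpha})$ after taking closures along a chain, so this filter base has $C$ as its $\theta^\omega$-adherence). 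Suppose for contradiction there is an open $W \supseteq C$ with $|A \cap W| < |A|$. Form the quotient $X/C$ collapsing $C$ to a point $c$, and put $\tau_1 = \{V \in \tau : c \in V \Rightarrow V \in \omega\}$; as in Theorem~\ref{tm6} one checks that $(X/C,\tau_1)$ is a regular space (this is where regularity of $\omega$ as a filter base is used — it gives the needed $\overline{V'} \subseteq V$ shrinkings at $c$), that $\tau_1$ is strictly coarser than $\tau$, and that the composite $q_C : X \to (X/C,\tau_1)$ is continuous but carries the closed set $X \setminus W$ to a non-closed set. This contradicts $RFC$, so no such $W$ exists, i.e. $C$ is a complete accumulation set of $A$.

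For \emph{sufficiency}, suppose $X$ satisfies the two conditions but is not $RFC$: there is a continuous $f$ from $X$ onto a regular space $Y$ and a closed $A \subseteq X$ with $f(A)$ not closed. Pick $y \in \overline{f(A)} \setminus f(A)$, let $\mathcal N_y = \{V_\alpha\}$ be the neighbourhood filter of $y$, and set $B := f^{-1}(y) = \bigcap_\alpha f^{-1}(V_\alpha)$; since $Y$ is regular, this exhibits $B$ as a $\theta^\omega$-closed subset of $X$. The open set $X \setminus A$ contains $B$ and meets every $f^{-1}(V_\alpha)$, so $W_\alpha := f^{-1}(V_\alpha) \setminus (X \setminus A) \neq \emptyset$; choose $W_{\alpha_0}$ of minimal cardinality among the $W_\alpha$. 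One then shows, exactly as in Theorem~\ref{tm6}, that $W_{\alpha_0}$ does not $\theta(\omega)$-converge to $X \setminus B$: cover $X \setminus B$ by $\omega$-hulls $O(x)$ with $\overline{O(x)} \cap f^{-1}(V_{\alpha_x}) = \emptyset$, refine to a regular cover, and for any finite subfamily use $U = \bigcap_i f^{-1}(V_{\alpha_{x_i}})$ together with minimality of $W_{\alpha_0}$ to get $|U \cap W_{\alpha_0}| = |W_{\alpha_0}|$. By hypothesis $B$ is then a complete accumulation set of $W_{\alpha_0}$, so every open set containing $B$ meets $W_{\alpha_0}$ in a set of full cardinality — contradicting that $X \setminus A \supseteq B$ is open and disjoint from $W_{\alpha_0} \subseteq A$.

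The main obstacle, as in Theorem~\ref{tm6}, is verifying that the quotient topology $\tau_1$ on $X/C$ is genuinely regular rather than merely $S(n)$: here the $\omega$-nestedness of the generating family of $\omega$ (each $\overline{U_{\gamma+1}} \subseteq U_\gamma$) is exactly what supplies, at the collapsed point $c$, a neighbourhood base with the shrinking property characterizing regularity, and one must check this interacts correctly with the quotient neighbourhoods away from $c$. The remaining steps are routine transcriptions of the $S(n)$ arguments, using Theorem~\ref{tm10} in place of Theorem~\ref{tm3} and the Berri--Sorgenfrey characterization in place of the Porter--Votaw one.
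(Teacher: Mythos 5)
Your proposal follows exactly the route the paper intends: the paper's entire proof of Theorem~\ref{tm16} is the single sentence that it is analogous to that of Theorem~\ref{tm6}, and your transcription (regular spaces for $S(n)$-spaces, $\omega$-hulls for $n$-hulls, regular covers for $S(n-1)$-covers, $\theta^{\omega}$-closed sets for $\theta^{n}$-closed sets, Theorem~\ref{tm10} in place of Theorem~\ref{tm3}) is precisely that analogy carried out. You even isolate the one point the paper leaves implicit --- verifying that the coarsened quotient topology $\tau_1$ on $X/C$ is genuinely regular at the collapsed point, which is where the nestedness of the generating regular filter base is used --- so your writeup is, if anything, more explicit than the paper's.
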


\begin{proof} A proof of Theorem~\ref{tm16} is analogous to that of Theorem
~\ref{tm6}.
\end{proof}

{\bf Question 3.} Is every $RFC$ space necessarily compact ?

\bigskip

\begin{remark} Note that a negative answer to the question (Q27 in~{\cite{fgpp}}) of
compactness of $U$-closed space in which the closure of any open
set is $U$-closed be any non-compact $H$-closed Urysohn space.
\end{remark}

\bigskip

{\bf Question 4.}(Q26 in~{\cite{fgpp}}) Is an $R$-closed space in
which the closure of every open set is $R$-closed necessarily
compact ?

\bigskip

This work was supported by the Russian Foundation for Basic
Research (project no.~09-01-00139-a)  and by the Division of
Mathematical Sciences of the Russian Academy of Sciences (project
no.~09-T-1-1004).

\bibliographystyle{plain}

\end{document}